      \theoremstyle{plain}
      \newtheorem{theorem}{Theorem}[section]
      \newtheorem{lemma}[theorem]{Lemma}
      \newtheorem{corollary}[theorem]{Corollary}
      \newtheorem{proposition}[theorem]{Proposition}
      \newtheorem{remark}[theorem]{Remark}
      \newtheorem{definition}[theorem]{Definition}        
\numberwithin{equation}{section}
      \def\@setcopyright{}
      \def\serieslogo@{}
\def\A{\EuScript{A}} 
\def\E{\mathcal{V}}
\def\V{\mathcal{V}}
\def\k{M}
\def\M{{X}}
\def\R{\mathbb R}
\def\Z{\mathbb Z}
\def\N{\mathbb N}
\def\dist{\text{dist}}
\def\Id{\text{Id}}
\def\e{\epsilon}
\def\a{\alpha}
\def\ta{\tilde a}
\def\tb{\tilde b}
\def\b{\beta}
\def\la{\lambda}
\def\La{\Lambda}
\def\g{\gamma}
\def\r{\mathcal R}
\def\QED{\hfill\hfill{\square}}
\begin{document}

\date{July 23, 2016}
\author{Boris Kalinin and Victoria Sadovskaya$^{\ast}$}

\address{Department of Mathematics, The Pennsylvania State University, University Park, PA 16802, USA.}
\email{kalinin@psu.edu, sadovskaya@psu.edu}

\title[Periodic approximation of Lyapunov exponents]
{Periodic approximation of Lyapunov exponents for Banach cocycles} 

\thanks{{\it Mathematical subject classification:}\, 37H15, 37D20, 37D25}
\thanks{{\it Keywords:}\, Cocycles, Lyapunov exponents, periodic orbits, hyperbolic systems}
\thanks{$^{\ast}$ Supported in part by NSF grant DMS-1301693}


\begin{abstract} 

We consider group-valued cocycles over  dynamical systems.
The base system is a homeomorphism $f$ of a metric space satisfying a 
closing property, for example a hyperbolic dynamical system or a 
subshift of finite type. The cocycle $\A$ takes values in the group of invertible 
bounded linear operators on a Banach space and is H\"older continuous. 
We prove that upper and lower Lyapunov exponents of $\A$ with respect to 
an ergodic invariant measure $\mu$ can be approximated in terms of the 
norms of the values of $\A$  on periodic orbits of $f$.
We also show that these exponents
 cannot always be approximated by the exponents of $\A$ with respect
 to measures on periodic orbits. 
Our arguments include a result of independent interest on  construction
and properties of  a  Lyapunov norm for infinite dimensional setting.
As a corollary, we obtain estimates of the growth of the norm and of the 
quasiconformal distortion of the cocycle in terms of the growth at the
periodic points of $f$.

\end{abstract}

\maketitle 

\section{Introduction and statements of the results}

Cocycles with values in a group of linear operators on a vector space $V$
are the prime examples of non-commutative cocycles over dynamical systems.
For finite dimensional $V$ they  have been extensively studied, with examples including random matrices and derivative cocycles of  smooth dynamical
systems. 

The infinite dimensional case is more difficult and so far is much 
less developed. The simplest examples  are given by random and 
Markov sequences of operators. They correspond to locally constant cocycles 
over subshifts of finite type, which can be viewed as symbolic systems with hyperbolic behavior. Similarly to finite dimensional case, the derivative of a
smooth infinite dimensional system gives a natural example of an operator
valued cocycle. We refer to monograph \cite{LL} for an overview of results 
in this area and to \cite{GK,LY,M} for some of the recent developments.

In this paper we consider a  general setting of cocycles
of bounded operators on a Banach space and focus on H\"older continuous 
cocycles over dynamical systems with hyperbolic behavior. Some of our results
 hold for measurable cocycles over measure preserving systems, which are defined similarly.
  \newpage
  
\begin{definition} Let $f$ be a homeomorphism of a metric space $X$, 
let $G$ be a topological  group equipped with a complete metric $d$, and let 
$A$ be a function from $X$ to $G$  . 
The $G${\em -valued  cocycle over $f$ generated by }$A$ 
is the map $\A:\,\M \times \Z \,\to G$ defined  by 
 $$
\begin{aligned}
\A(x,0)=e_G,  \;\;\;\; &\A(x,n)=\A_x^n = 
A(f^{n-1} x)\circ \cdots \circ A(x), \\
&\A(x,-n)=\A_x^{-n}= (\A_{f^{-n} x}^n)^{-1} , \quad n\in \N.
\end{aligned}
$$
Clearly, $\A$ satisfies the {\em cocycle equation}\,
$\A^{n+k}_x= \A^n_{f^k x} \circ \A^k_x$.
\vskip.05cm
We say that the cocycle $\A$ is {\em bounded}\, if its generator $A $
 is a bounded function and that $\A$ is  {\em $\a$-H\"older} if $A$  is H\"older continuous with exponent $0<\a \le 1$ with respect to the metric $d$, i.e. there exists $M>0$ such that
 \begin{equation} \label{holder}
 d (A(x), A(y)) \le  \, M \dist (x,y)^\alpha \quad\text{for all sufficiently close }x,y \in X. 
\end{equation}
\end{definition}

In this paper we consider cocycles with values in the group of invertible 
operators on a Banach space $V$.  
The space
$L(V)$  of bounded linear operators on $V$  is a Banach space equipped 
with the operator norm $\|A\|=\sup \,\{ \|Av\| : \,v\in V , \;\|v\| \le 1\}.$ 
The open set $GL(V)$  of invertible elements in  $L(V)$ is a topological group  
and a complete metric space with respect  to the metric
$$
d (A, B) = \| A  - B \|  + \| A^{-1}  - B^{-1} \|.
$$
We call a $GL(V)$-valued cocycle $\A$ a {\em Banach cocycle}. 

\begin{definition}
Let $\mu$ be an ergodic $f$-invariant Borel probability measure on $\M$. The {\em upper and lower Lyapunov exponents} of $\A$ with respect to $\mu$ are
\begin{equation} \label{exponents}
\begin{aligned}
&\lambda(\A,\mu)= \lim_{n \to \infty} \frac 1n \log \| \A_x ^n \| 
\quad \text{for } \mu \text { almost every} \; x\in \M  , \\
&\chi (\A,\mu) = \lim_{n \to \infty} \frac 1n \log \| (\A_x ^n)^{-1} \|^{-1} 
\quad \text{for } \mu \text { almost every} \; x\in \M.
\end{aligned}
\end{equation}
\end{definition}

\noindent Each of these limits exists and is the same $\mu$ 
almost everywhere by the Subadditive Ergodic Theorem, see Section 2.
If $V$ is finite dimensional, these are precisely the largest and smallest
of the Lyapunov exponents given by the Multiplicative Ergodic Theorem 
of Oseledets.  

Our goal is to obtain a periodic approximation for the exponents of a cocycle
over a system with hyperbolic behavior. To streamline the arguments we 
formulate explicitly the property that we will use.

\begin{definition} \label{closing}
We say that a homeomorphism $f$ of a metric space $X$ 
satisfies the {\em closing property} if there exist 
constants $D,\, \g,\, \delta_0 >0$ such that for any $x \in X$ and $k\in\N$ with 
$\dist (x, f^k x) < \delta_0$ there exists a point $p \in X$ with 
$f^k p =p$ such that the orbit segments $x, fx, ... , f^k x$ and 
$p, fp, ... , f^k p$ are exponentially close, more precisely, 
\begin{equation}  \label{d-close-traj}
\dist (f^i x, f^i p) \le D \,  \dist (x, f^k x) \, e^{ -\g\, \min\,\{\,i,\,k-i \,\} }
\quad\text{for every }i=0, ... , k.
\end{equation}
\end{definition}

Systems satisfying this closing property include symbolic dynamical 
systems such as subshifts of finite type as well as smooth hyperbolic systems
such as hyperbolic automorphisms of tori and nilmanifolds, Anosov 
diffeomorphisms, and hyperbolic maps of locally maximal sets. For the smooth systems the closing property follows from Anosov Closing Lemma, see e.g. \cite[6.4.15-17]{KH}. 

Now we state our main result. Note that we do not assume compactness of $X$.

\begin{theorem} \label{main} 
Let $X$ be  separable metric space, let $f$ be a homeomorphism  of $X$
satisfying the closing property, let $\mu$ be an ergodic $f$-invariant Borel probability measure on $X$, and let $\A$ be a bounded H\"older continuous Banach 
cocycle over $f$.
Then for each  $\e>0$ there exists a periodic point $p=f^kp$ in $X$
such that 
\begin{equation} \label{main eq}
\left| \,\lambda(\A,\mu)-  \frac 1k \log \| \A_p ^k \| \, \right|<\e \quad\text{and}\quad
\left| \, \chi (\A,\mu)- \frac 1k \log \| (\A_p ^k)^{-1} \| ^{-1} \, \right|<\e.
\end{equation}
Moreover, for any $N\in \N$ there exists such $p=f^kp$ with $k>N$.
\end{theorem}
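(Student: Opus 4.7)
The argument proceeds in three steps: (i) find a $\mu$-generic point $x$ and a near-return time $k$ at which the ergodic-theorem averages $\frac{1}{k}\log\|\A_x^k\|$ and $\frac{1}{k}\log\|(\A_x^k)^{-1}\|^{-1}$ are within $\e$ of $\lambda$ and $\chi$, while $\dist(x,f^kx)<\delta_0$; (ii) apply the closing property to produce a periodic point $p=f^kp$ whose orbit exponentially shadows that of $x$; and (iii) use the infinite-dimensional Lyapunov norm announced in the abstract to transfer the growth estimates from the orbit of $x$ to that of $p$.

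Step (i) is standard ergodic theory. The sequence $\log\|\A_x^n\|$ is subadditive and $\log\|(\A_x^n)^{-1}\|$ is subadditive as well, so by the (sub)additive ergodic theorem $\frac{1}{n}\log\|\A_x^n\|\to\lambda$ and $\frac{1}{n}\log\|(\A_x^n)^{-1}\|^{-1}\to\chi$ on a full-measure set $\Lambda_0$. Fix $\e>0$. By Egorov and inner regularity of $\mu$ there is a compact set $K\subset\Lambda_0$ with $\mu(K)>1-\e/2$ on which both convergences are uniform. Since $X$ is separable, cover $K$ by countably many open balls of diameter less than $\delta_0/2$; at least one such ball $U$ has $\mu(K\cap U)>0$, and Poincar\'e recurrence in $K\cap U$ at arbitrary depth $>N$ yields infinitely many $k>N$ and points $x\in K\cap U$ with $f^kx\in K\cap U$, $\dist(x,f^kx)<\delta_0$, and the two ratios within $\e$ of $\lambda$ and $\chi$. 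Step (ii) is then immediate from Definition \ref{closing}, giving $\dist(f^ix,f^ip)\le D\delta_0\,e^{-\g\min\{i,k-i\}}$.

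Step (iii) is the technical heart. H\"older continuity and the exponential shadowing yield the summable perturbation estimate $\|A(f^ip)-A(f^ix)\|\le MD^\a\delta_0^\a\,e^{-\a\g\min\{i,k-i\}}$. In the absence of an Oseledets splitting one still constructs a tempered family of norms $\|\cdot\|_{y,\e}$ on $V$, defined measurably on a full-measure set, satisfying
\[
e^{\chi-\e}\|v\|_{y,\e}\le\|\A_y v\|_{fy,\e}\le e^{\lambda+\e}\|v\|_{y,\e},
\]
and comparable to $\|\cdot\|$ with constants uniformly bounded (by some $C_0$) on a further Lusin restriction of $K$. Transferring the Lyapunov norm at $f^ix$ to $f^ip$ via the identity of $V$ and using the H\"older bound to control a single step, one obtains
\[
\|A(f^ip)\|_{f^ix\to f^{i+1}x,\,\e}\le e^{\lambda+\e}\bigl(1+C_1 e^{-\a\g\min\{i,k-i\}}\bigr),
\]
and the product over $i=0,\dots,k-1$ is bounded uniformly in $k$ because $\sum_i e^{-\a\g\min\{i,k-i\}}$ converges. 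A telescoping perturbation argument in the Lyapunov norm then shows the pointwise two-sided estimate
\[
(1+C_2\delta_0^\a)^{-1}e^{-k\e}\|\A_x^k v\|_{f^kx,\e}\le \|\A_p^k v\|_{f^kx,\e}\le (1+C_2\delta_0^\a)e^{k\e}\|\A_x^k v\|_{f^kx,\e}
\]
for all $v\in V$. Converting to the operator norm via $C_0$ and dividing by $k$ absorbs the $e^{k\e}$ factor into an additive $\e$ error as $k\to\infty$, producing the first inequality in \eqref{main eq}. The symmetric argument applied to $(\A_p^k)^{-1}$ gives the second. The ``moreover'' clause follows because step (i) provides such return times $k$ arbitrarily large.

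\textbf{Main obstacle.} The decisive step is the Lyapunov-norm construction in infinite dimensions: one must build a single pointwise norm that simultaneously captures the extremal rates $\lambda$ and $\chi$ (with no intermediate Oseledets subspaces to organize the cocycle) and that varies measurably and temperedly, so that after Lusin restriction the comparison to the operator norm is uniform. Once this is in place, the telescoping perturbation mimics the matrix case: the H\"older decay of $A(f^ip)-A(f^ix)$ along the shadowed orbit suffices to keep the multiplicative distortion between $\A_p^k$ and $\A_x^k$ bounded by $(1+C\delta_0^\a)e^{k\e}$, which disappears in the limit after the logarithm and division by $k$.
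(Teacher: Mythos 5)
Your outline reproduces the paper's upper-bound mechanism: the Lyapunov norm of Section~3, a Lusin restriction (the paper's $\r_\ell$) so that the comparison constant $K$ is at most $\ell$, and the summable perturbations $\|\Delta_i\|\le M'\delta^\alpha e^{-\alpha\gamma\min\{i,k-i\}}$ give $\|\A_p^k\|\le\ell\,e^{c\ell\delta^\alpha}e^{k(\lambda+\e)}$, which is essentially Lemma~\ref{mainest}. But the lower bound on $\|\A_p^k\|$ is a genuine gap. The pointwise inequality you assert,
\[
\|\A_p^k v\|_{f^kx,\e}\ge(1+C_2\delta_0^\alpha)^{-1}e^{-k\e}\|\A_x^k v\|_{f^kx,\e},
\]
is not obtainable from your telescoping. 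Writing $\A_p^k v-\A_x^k v=\sum_i\A_{x_{i+1}}^{k-(i+1)}(\A_{p_i}-\A_{x_i})\A_p^i v$, the Lyapunov norm only bounds the first factor by $e^{(k-i-1)(\lambda+\e)}$, whereas $\|\A_x^k v\|$ can be as small as roughly $e^{k(\chi-\e)}\|v\|$; the accumulated error is therefore not controlled relative to $\|\A_x^k v\|$. Indeed the Lyapunov norm furnishes no pointwise lower rate at $\lambda$ — only $\|\A_x v\|_{fx,\e}\ge e^{\chi-\e}\|v\|_{x,\e}$ — so a two-sided pointwise comparison with slack $e^{\pm k\e}$ is false in general.

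What makes the paper's lower bound close is a quantitative subadditive ergodic theorem that your proposal never invokes. By Karlsson--Margulis (Proposition~\ref{KM}), sharpened by Gou\"ezel--Karlsson (Proposition~\ref{GK}), there are infinitely many $n$ with $\|\A_{f^ix}^{n-i}\|\le\|\A_x^n\|e^{-(\lambda-\e)i}$ for all $L\le i\le n$. Substituting this into the telescoping sum for the operator norm, together with the bound $\|\A_p^i\|\le\ell e^{c\ell\delta^\alpha}e^{i(\lambda+\e)}$ from Lemma~\ref{mainest} and the H\"older decay, yields $\|\A_x^n-\A_p^n\|\le C(\delta)\|\A_x^n\|$ with $C(\delta)<1/2$ for $\delta$ small, hence $\|\A_p^n\|\ge\|\A_x^n\|/2$ — an operator-norm, not pointwise, estimate. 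Your Poincar\'e-recurrence step does not produce such Karlsson--Margulis times, nor the delicate sizing $n\le m\le n(1+\e')<k<n(1+2\e')$ supplied by Guysinsky's Lemma~\ref{G} (which you would also need, since the period $k$ is chosen near a regular time $n$ rather than equal to it, and an intermediate $m$ with $f^mx\in\r_\ell$ is required for the Lyapunov-norm comparison). Finally, to approximate $\lambda$ and $\chi$ with the \emph{same} period one must find $n$ that is a Karlsson--Margulis time simultaneously for $a_n=\log\|\A_x^n\|$ and $\ta_n=\log\|(\A_x^n)^{-1}\|$; this forces the positive-upper-density refinement of Gou\"ezel--Karlsson, which your argument does not address. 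Without this machinery the lower bound, and hence \eqref{main eq}, cannot be established.
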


A periodic approximation of Lyapunov exponents for finite-dimensional $V$ 
was established in \cite[Theorem 1.4]{K} under slightly stronger closing assumption. It was shown, in particular,  that for each $\e >0$ there exists a  point $p=f^k p$ such that 
for the uniform measure $\mu_p$ on its orbit 
\begin{equation}\label{periodic approx}
\left| \,\lambda(\A,\mu)-  \lambda(\A,\mu_p) \, \right|<\e \quad\text{and}\quad
\left| \, \chi (\A,\mu)- \chi (\A,\mu_p) \, \right|<\e,
\end{equation}
and it is clear from the argument that \eqref{main eq} also holds.
We note that 
 $$\lambda(\A,\mu_p) = (1/k) \log \,(\text{spectral radius of }\A_p^k)
 $$
and so in our setting for  $p=f^kp$ satisfying \eqref{main eq}  we have
\begin{equation}\label{one-sided}
\la(\A,\mu_p)< \lambda(\A,\mu)+\e \quad \text{and}\quad
\chi(\A,\mu_p)> \chi(\A,\mu)-\e.
\end{equation}
However, approximation \eqref{periodic approx} is not always possible
in infinite-dimensional setting. The following proposition is based on an example by L. Gurvits of a pair of operators whose joint spectral radius is greater than the generalized spectral radius \cite{Gu}.

\begin{proposition} \label{no periodic}
There exists a locally constant cocycle $\A$ over a full shift on two symbols
and an ergodic invariant measure $\mu$ 
such that $
\,\la(\A,\mu) > \sup_{\mu_p} \la(\A,\mu_p)$, where the supremum is taken 
over all uniform measures $\mu_p$ on periodic orbits.
\end{proposition}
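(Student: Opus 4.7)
The strategy is to use Gurvits' pair $A_1, A_2 \in GL(V)$ \cite{Gu} of invertible bounded operators on a Banach space $V$ whose joint spectral radius
$$\hat\rho := \lim_{n\to\infty} \max_{w\in\{1,2\}^n}\|A_{w_n}\cdots A_{w_1}\|^{1/n}$$
strictly exceeds their generalized spectral radius
$$\bar\rho := \limsup_{n\to\infty} \max_{w\in\{1,2\}^n}\rho(A_{w_n}\cdots A_{w_1})^{1/n},$$
with $\rho$ the spectral radius. Form the locally constant cocycle $\A$ over the shift $f=\sigma$ on $\Sigma=\{1,2\}^{\Z}$ by $A(\omega)=A_{\omega_0}$, so that $\A_\omega^n=A_{\omega_{n-1}}\cdots A_{\omega_0}$.

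Any $\sigma$-periodic orbit of period $k$ is encoded by a word $w=w_1\cdots w_k$, and the uniform measure $\mu_p$ on it satisfies $\la(\A,\mu_p)=k^{-1}\log\rho(A_{w_k}\cdots A_{w_1})$. Since $s_n:=\max_{|w|=n}\rho(A_w)^{1/n}$ is supermultiplicative along multiples of any $n$ (from $\rho(A_w^m)=\rho(A_w)^m$), the supremum over periodic orbits equals $\log\bar\rho$.

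To produce an ergodic $\mu$ with $\la(\A,\mu)>\log\bar\rho$, for each $n$ choose $w^{(n)}\in\{1,2\}^n$ with $\|A_{w^{(n)}_n}\cdots A_{w^{(n)}_1}\|\geq (\hat\rho-\e_n)^n$, $\e_n\to 0$, and any extension $\omega_n\in\Sigma$. Let $\mu$ be a weak-$*$ accumulation point of the empirical measures $\mu_n=n^{-1}\sum_{j=0}^{n-1}\delta_{\sigma^j\omega_n}$; it is automatically $\sigma$-invariant by the Krylov-Bogolyubov argument. Because $\A$ is locally constant, each $\phi_m(\omega):=\log\|\A_\omega^m\|$ is continuous on $\Sigma$. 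Writing $j\in\{0,\ldots,n-m\}$ uniquely as $j=\ell m+r$ with $0\leq r<m$, iterating the subadditivity $\phi_{a+b}\leq\phi_a+\phi_b\circ\sigma^a$, and summing over the $m$ alignments gives
$$\sum_{j=0}^{n-m}\phi_m(\sigma^j\omega_n)\;\geq\;m\,\phi_n(\omega_n)-O(m^2).$$
Dividing by $n$ and passing to the weak-$*$ limit, the continuity of $\phi_m$ yields $\int\phi_m\,d\mu\geq m\log\hat\rho$ for every $m$; together with the universal upper bound $\phi_m\leq m\log\hat\rho+o(m)$, the subadditive ergodic theorem gives $\la(\A,\mu)=\log\hat\rho>\log\bar\rho$. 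An ergodic decomposition $\la(\A,\mu)=\int \la(\A,\mu_\omega)\,d\mu(\omega)$, combined with the universal bound $\la(\A,\cdot)\leq\log\hat\rho$, forces $\la(\A,\mu_\omega)=\log\hat\rho$ for $\mu$-almost every ergodic component, providing the required measure.

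The only nontrivial computation is the subadditive telescoping estimate, which is routine in the theory of joint spectral radii. The real substance lies in Gurvits' example itself: in finite dimensions the Berger-Wang theorem gives $\hat\rho=\bar\rho$, so the construction above would yield only $\la(\A,\mu)=\log\bar\rho$ and no gap could appear; the strict inequality is a genuinely infinite-dimensional phenomenon.
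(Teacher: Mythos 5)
Your proposal is correct and rests on the same two pillars as the paper's proof: Gurvits' construction of invertible operators $B_0,B_1$ on a Hilbert space with joint spectral radius $\hat\rho$ strictly larger than generalized spectral radius $\bar\rho$, and the identification $\sup_{\mu_p}\lambda(\A,\mu_p)=\log\bar\rho$ via $\rho(A^m)=\rho(A)^m$. Where you diverge is in how the other half of the gap is turned into a statement about a single ergodic measure. The paper invokes Schreiber's variational principle for subadditive cocycles over a continuous map of a compact space (Theorem~1 of \cite{Sch}), which gives $\sup_\mu\nu(a,\mu)=\hat\nu(a)=\log\hat\rho$ directly, so some ergodic $\mu$ beats $\log\bar\rho$. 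You instead reprove the needed piece of that variational principle from scratch: pick words nearly realizing $\hat\rho$, take empirical measures along them, pass to a weak-$*$ limit using that $\phi_m=\log\|\A_\cdot^m\|$ is continuous (locally constant in finitely many coordinates) and bounded, use the telescoping alignment estimate to push $\int\phi_m\,d\mu\geq m\log\hat\rho$, combine with the uniform upper bound $\phi_m\leq m\log\hat\rho+o(m)$ and Kingman to get $\lambda(\A,\mu)=\log\hat\rho$, and finally disintegrate into ergodic components. This is more work but buys self-containedness: you do not need to cite the Schreiber/Sturman--Stark type result, only Kingman's theorem, Krylov--Bogolyubov, and the ergodic decomposition. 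One point worth stating explicitly in a polished write-up is the validity of the disintegration identity $\lambda(\A,\mu)=\int\lambda(\A,\mu_\omega)\,d\mu(\omega)$ for non-ergodic $\mu$; it follows from Kingman's theorem (the a.e. limit is an invariant $L^1$ function whose integral equals $\nu(a,\mu)$ and whose value on an ergodic component is that component's exponent), but the step should be flagged rather than left implicit.
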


The proof of the finite dimensional periodic approximation result in \cite{K}
relies on Multiplicative Ergodic Theorem, which yields that the cocycle has
finitely many Lyapunov exponents and, in particular, the largest one is isolated. 
As this may not be the case in infinite dimensional setting even for a single operator,
we have to use a different approach. First we construct a measurable
Lyapunov norm for infinite dimensional setting and establish its temperedness.
This is a result of independent interest. We use this norm to obtain estimates of
the growth of the cocycle for trajectories close to ones with regular behavior. 
We also use further developments of the subadditive ergodic theorem 
obtained in \cite{KM,GK} and extend various techniques from \cite{K} and \cite{G}. 
Our methods may also be useful in the study of cocycles with values in 
diffeomorphism groups as well as of infinite-dimensional nun-uniformly 
hyperbolic dynamical systems on Hilbert or Banach manifolds.

\vskip.1cm

As a corollary of our main result, we obtain  estimates of the growth of the norm 
and of the {\it quasiconformal distortion} 
 $Q(x,n):= \| \A_x^n\| \cdot \| (\A_x^n)^{-1}\|$ of a cocycle $\A$ in terms of the growth  at periodic points.

\begin{corollary} \label{norm}
Let $f$ be a homeomorphism of a compact metric space $X$
satisfying the closing property and let $\A$ be a  H\"older continuous Banach 
cocycle over $f$. Then
$$
\text{\bf{(i)}}\;\; 
\lim _{n\to \infty} \sup \left\{ \| \A^n_x\|^{1/n}:\, x\in X \right\} = \,
  \limsup _{k\to \infty}\,\, \sup \left\{ \| \A^k_p\|^{1/k}:\,  p=f^kp \in X \right\}. \hskip2.7cm 
$$
In particular, if for some numbers $C$ and $s$ we have
$\|\A^k_p\| \le Ce^{s k}$ whenever $p=f^kp$, then 
for each $\e>0$ there exists a number $C_\e$ such that
$$  \|\A^n_x\| \le C_\e \,e^{(s+\e) n} \quad\text{for all }x\in X \text{ and }n\in \N.
$$
$$\text{\bf{(ii)}}\; \;
\lim _{n\to \pm \infty} \sup \left\{ Q(x,n)^{1/|n|}:\, x\in X \right\} = \,
  \limsup _{k\to \infty}\, \sup\, \left\{ Q(p,k)^{1/k}:\, p=f^kp \in X \right\}. \hskip2cm
$$
In particular, if for some numbers $C$ and $s$ we have
$Q(p,k) \le Ce^{s k}$ whenever $p=f^kp,$
then for each $\e>0$ there exists a number $C'_\e$ such that
$$
Q(x,n)\le C'_\e \,e^{(s+\e)|n|} \quad\text{for all }x\in X \text{ and }n\in \Z.
$$
\end{corollary}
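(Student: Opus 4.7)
I would combine Theorem~\ref{main} with the subadditive variational principle. For part (i), apply it to $\phi_n(x):=\log\|\A^n_x\|$. Submultiplicativity of the operator norm makes $s_n:=\sup_{x\in X}\log\|\A^n_x\|$ subadditive in $n$, so Fekete's lemma gives $L:=\lim_n s_n/n=\inf_n s_n/n$. The inequality $\limsup_{k\to\infty}\sup_p\|\A^k_p\|^{1/k}\le e^L$ is trivial since periodic points lie in $X$. For the reverse direction, since $X$ is compact, $f$ is continuous, and each $\phi_n$ is continuous (because $A:X\to GL(V)$ is H\"older continuous in the metric $d$, and the operator norm is continuous on $GL(V)$), the subadditive variational principle yields
$$L \;=\; \sup_{\mu}\,\lim_n\tfrac{1}{n}\!\int\!\phi_n\,d\mu \;=\; \sup_{\mu\text{ ergodic}}\lambda(\A,\mu),$$
where the reduction to ergodic $\mu$ is via ergodic decomposition. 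Given $\e>0$, pick an ergodic $\mu$ with $\lambda(\A,\mu)>L-\e$; the ``moreover'' clause of Theorem~\ref{main} then produces, for any $N$, a periodic point $p=f^kp$ with $k>N$ and $(1/k)\log\|\A^k_p\|>L-2\e$. Letting $\e\to 0$ gives the equality in (i), and the ``in particular'' clause follows by unwinding $L=\inf_n s_n/n$ and absorbing the finitely many small values of $n$ into the constant $C_\e$.

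Part (ii) proceeds along the same lines with $\log Q(x,n)$ in place of $\phi_n$. This sequence is subadditive in $n$ because $Q(x,n+m)\le Q(x,n)\,Q(f^n x,m)$, and for ergodic $\mu$ one has $(1/n)\log Q(x,n)\to\lambda(\A,\mu)-\chi(\A,\mu)$ almost everywhere. The variational principle identifies the $n\to+\infty$ limit on the left with $\sup_\mu[\lambda(\A,\mu)-\chi(\A,\mu)]$. The decisive feature of Theorem~\ref{main} is that the \emph{same} periodic point $p$ approximates both $\lambda$ and $\chi$; adding the two inequalities in~\eqref{main eq} gives $(1/k)\log Q(p,k)>\lambda(\A,\mu)-\chi(\A,\mu)-2\e$, and the argument from part (i) concludes. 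The two-sided limit reduces to the one-sided case via $Q(x,-n)=Q(f^{-n}x,n)$ and, for $p=f^kp$, $Q(p,-k)=Q(p,k)$.

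The only step demanding real care, rather than a true obstacle, is invoking the subadditive variational principle in the required form; it is classical once the continuity of each $\phi_n$ and $\log Q(\cdot,n)$ is established, which in turn follows from the H\"older (hence continuous) dependence of $A$ in the metric $d$ on $GL(V)$.
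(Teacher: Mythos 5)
Your proposal is correct and essentially matches the paper's proof: both arguments combine the subadditive variational principle (Schreiber's theorem, cited as \cite{Sch}) with Theorem~\ref{main}, observe that the opposite inequality is immediate since periodic points lie in $X$, handle the ``in particular'' clause by the limit $\lim_n s_n/n$ plus compactness to absorb the small $n$ into $C_\e$, and reduce $n\to-\infty$ to $n\to+\infty$ via $Q(x,-n)=Q(f^{-n}x,n)$. You also correctly identify the key point for (ii) — that Theorem~\ref{main} produces a single periodic point approximating both exponents — which is exactly how the paper proceeds.
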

The number $\hat \la (\A)=\lim _{n\to \infty} \sup \left\{ \| \A^n_x\|^{1/n}:\, x\in X \right\} $ gives the maximal growth rate of the cocycle and it is known that 
$\hat \la (\A) = \sup_\mu \lambda(\A,\mu)$, where the supremum is taken 
over all ergodic $f$-invariant measures. Part (i) of the corollary can
also be deduced from a recent work of M. Guysinsky \cite{G}, where he
 showed that for each $\e>0$ there exists a periodic point  $p=f^kp$ such 
 that $ \frac 1k \log \| \A_p ^k \| > \hat \la (\A) -\e$. He used this result to obtain
 a generalization of Livsic periodic point theorem for Banach cocycles.

Part (ii) of the corollary has a broad scope of applications as the  quasiconformal distortion $Q(x,n)$ 
is used in so called {\em fiber bunching} condition for cocycles over hyperbolic systems. This condition means that  $Q(x,n)$ is dominated by expansion and contraction in the base system and it plays a crucial role in most of the results on non-commutative cocycles over hyperbolic  systems. The corollary allows to obtain fiber bunching from the periodic data.

\begin{remark}
More generally, Theorem \ref{main} and Corollary \ref{norm} hold if we 
replace $X\times V$ by a H\"older continuous vector bundle $\V$ over 
$X$ with fiber $V$ and the cocycle $\A$ by an automorphism $\mathcal F: \V \to \V$ 
covering $f$. 
This setting is described in detail in Section 2.2 of \,\cite{KS13} and the proofs 
work without any significant modifications.
\end{remark}

\vskip.1cm

We discuss preliminaries on subadditive cocycles in Section 2, 
 give construction and properties of the Lyapunov norm in Section 3,
 prove Theorem \ref{main} in Section 4, and 
prove Proposition \ref{no periodic} and Corollary \ref{norm} in Section 5.


\section{Subadditive cocycles and their exponents}

A {\em subadditive cocycle} over a dynamical system $(X,f)$
is a sequence of functions $a_n:X\to \R\,$ such that 
$$
a_{n+k}(x)\le a_k(x) + a_n(f^kx) \quad \text{for all }x\in X \text{ and }k,n\in \N.
$$
\vskip.1cm

We define $\V=X\times V$ and view $\A^n_x$ as a map from $\V_x$ to $\V_{f^nx}$.
While this is not necessary for the trivial bundle, it makes notations and 
arguments more intuitive and facilitates extention to the bundle setting.
We denote 
\begin{equation}\label{an}
a_n(x)=\log \|\A_x^n\|, \quad
 b_n(x)=a_n(f^{-n}x)=\log \|\A_{f^{-n}x}^n\|,
\end{equation}
\begin{equation}\label{tan}
\ta_n(x)=\log \|(\A_x^n)^{-1}\|, \quad
 \tb_n(x)=\ta_n(f^{-n}x)=\log \|(\A_{f^{-n}x}^n)^{-1}\|=\log \|\A_{x}^{-n}\|.
\end{equation}

It is easy to see that  $a_n(x)$ and $\ta_n(x)$ are subadditive cocycles over $f$
and that   $b_n(x)$ and $\tb_n(x)$ are  subadditive cocycles over $f^{-1}$.   

For any ergodic measure-preserving transformation $f$ of a probability space 
$(X,\mu)$ and any subadditive cocycle over $f$ with integrable $a_n$, 
 the Subadditive Ergodic Theorem yields that for $\mu$ almost all  $x$ 
$$
 \lim_{n \to \infty} \frac 1n a_n(x)= \lim_{n \to \infty} \frac 1n a_n(\mu)=
\inf_{n \in \N} \frac 1n a_n (\mu) =: \nu (a,\mu) ,\; 
\text{ where }\; a_n(\mu)= \int_\M a_n(x) d\mu. 
$$
The limit $\nu (a,\mu) \ge -\infty$ is called the {\em exponent}\, of the cocycle $a_n$
with respect to $\mu$.

With our choice of $a_n(x)$, this theorem gives the existence 
of the limit in the first equation of \eqref{exponents} and that $\nu(a,\mu)$ is  
the upper Lyapunov exponent $\la=\la(\A,\mu)$ of the cocycle $\A$, so for  $\mu$ almost all  $x$
\begin{equation}\label{la}
\lim_{n \to \infty} \frac 1n \log \|\A_x^n\| =\lim_{n \to \infty} \frac 1n a_n(x)= 
\la:= \la (\A,\mu). 
\end{equation}
\vskip.2cm

Since  $\,b_n(\mu)=\int_\M b_n(x) d\mu = \int_\M a_n(x) d\mu =a_n (\mu),\,$  
it follows that for  $\mu$ almost all  $x$
\begin{equation}\label{bn}
 \lim_{n \to \infty} \frac 1n \log \|\A_{f^{-n}x}^n\| = 
 \lim_{n \to \infty} \frac 1n b_n(x) = \la.
\end{equation}
Similarly, for $\mu$ almost all $x$
\begin{equation}\label{-chi}
 \lim_{n \to \infty} \frac 1n \log \| \A_x ^{-n} \| =  \lim_{n \to \infty} \frac 1n \tb_n(x)
= \lim_{n \to \infty} \frac 1n \ta_n(x)= -\chi ,  
\end{equation}
where  $\chi:=\chi(\A,\mu)$ is the lower Lyapunov exponent of the cocycle $\A$
defined in \eqref{exponents}, and it is easy to see that $\chi \le \la$
and  both are finite.
We denote 
\begin{equation} \label{Lambda def}
\La = \La_\mu=\{ x\in X : \text{ equations \eqref{la}, \eqref{bn}, 
and \eqref{-chi} hold}\,\}
\end{equation}
and  in particular both equalities in \eqref{exponents} hold. Clearly, $\mu(\La)=1$.
\vskip.2cm


\section{Lyapunov norm} \label{Lyapunov norm}

In this section we construct a certain version of Lyapunov or adapted norm for 
our setting, which allows us to control the norms of $\A_x$ and $(\A_x)^{-1}$.
It is cruder than the usual Lyapunov norm for matrix cocycles constructed using Oseledets splitting, which is unavailable our setting.
Our construction is closer to that of an adapted metric for an Anosov system.
Since the Lyapunov norm in general depends only measurably on $x$, 
it is important to provide a comparison with the standard norm. We do
this in the second part of Proposition \ref{Lnorm properties} below.
\vskip.1cm

For a fixed $\e >0$ and a point $x\in \La$, the {\it $\e$-Lyapunov norm}  
$\|.\|_{x}=\|.\|_{x,\e}$  on $\V_x$ is defined as follows. For $u\in \V_x$, 
\begin{equation}  \label{Lprod}
\|u\|_{x} = \|u\|_{x,\e}  = \sum_{n=0}^\infty \, \|  \A_x^n (u)\| \,e^{-(\la +\e) n}
+\sum_{n=1}^\infty \, \|  \A_x^{-n} (u)\| \,e^{(\chi -\e) n}.
\end{equation}
By the definition \eqref{Lambda def} of $\La$, both series converge exponentially. 

\vskip.1cm
We denote the operator norm with respect to the Lyapunov norms  by 
$\| . \|_{y \leftarrow x}$.
 For any points $x,y\in \La$ and any 
linear map $A:\E_x \to \E_y$ it is defined by
$$
\| A \| _{y\leftarrow x}=\sup \, \{ \| A(u) \|_{y,\e}:  \; u\in \V_x, \;\, \| u\|_{x,\e}=1 \}.
$$

\begin{proposition} \label{Lnorm properties} 
Let $f$ be an ergodic invertible measure-preserving transformation 
of a probability space $(X,\mu)$ and let $\A$ be a bounded measurable 
Banach cocycle over $f$ with the upper Lyapunov exponent $\la$
and the lower Lyapunov exponent $\chi$. Then for each $\e>0$ the Lyapunov 
norm $\| . \|_{x,\e}$ given by \eqref{Lprod} satisfies the following.
\vskip.2cm

\noindent {\bf (i)\,} For each point $x$ in $\La$,
\begin{equation}  \label{estAnorm}
 \| \A_x \|_{f x \,\leftarrow x} \le e^ {\la+\e} \quad\text{and}\quad\,
 \| \A_x^{-1} \|_{f^{-1} x\, \leftarrow x} \le e^ {-\chi+\e}.
\end{equation}

\vskip.2cm

\noindent {\bf (ii)} There exists an $f$-invariant set $\r \subset \La$ with 
$\mu(\r)=1$ so that for each $\rho>0$ there exists a measurable 
function $K_{\rho}(x)$ such that  for  all $x\in \r$
\begin{equation}  \label{estLnorm}
\| u \| \le  \| u \|_{x,\e} \le K_{\rho}(x) \|u\| \quad \text{for all }
 u \in \E_x, \quad \text{and} 
\end{equation}
\begin{equation}  \label{estK}
 K_{\rho}(x) e^{-\rho |n|}  \le K_{\rho}(f^n x) \le  K_{\rho}(x) e^{\rho |n|}  \quad 
\text{for all } n \in \Z.
\end{equation}

\end{proposition}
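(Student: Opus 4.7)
Part (i) follows from a direct reindexing of the defining series. Using the cocycle identity $\A_{fx}^m\circ A(x) = \A_x^{m+1}$ for all $m\in\Z$, the series defining $\|A(x)u\|_{fx,\e}$ rewrites as
$$\|A(x)u\|_{fx,\e} \,=\, e^{\la+\e}\sum_{m\ge 1}\|\A_x^m u\|\,e^{-(\la+\e)m} \,+\, e^{\chi-\e}\Bigl[\|u\| + \sum_{m\ge 1}\|\A_x^{-m} u\|\,e^{(\chi-\e)m}\Bigr].$$
Since $\chi\le\la$, both prefactors are $\le e^{\la+\e}$, and comparing with \eqref{Lprod} gives $\|A(x)u\|_{fx,\e}\le e^{\la+\e}\|u\|_{x,\e}$. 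The bound on $A(x)^{-1}$ is entirely analogous, with $e^{\la+\e}$ replaced by $e^{-\chi+\e}$.

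For part (ii), the inequality $\|u\|\le\|u\|_{x,\e}$ is immediate from the $n=0$ term of \eqref{Lprod}. Set
$$K(x) \,=\, \sum_{n=0}^\infty \|\A_x^n\|\, e^{-(\la+\e)n} \,+\, \sum_{n=1}^\infty \|\A_x^{-n}\|\, e^{(\chi-\e)n};$$
both series converge on $\La$ by \eqref{la} and \eqref{-chi}, so $\|u\|_{x,\e}\le K(x)\|u\|$ for $x\in\La$. I propose to take the tempered majorant
$$K_\rho(x) \,:=\, \sup_{n\in\Z} K(f^n x)\, e^{-\rho|n|}.$$
Taking $n=0$ gives $K_\rho\ge K$, and the elementary inequality $|m-1|\ge|m|-1$ yields $e^{-\rho}K_\rho(x)\le K_\rho(fx)\le e^\rho K_\rho(x)$, which iterates to \eqref{estK}. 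So the task reduces to producing an $f$-invariant conull set $\r\subset\La$ on which $K_\rho(x)<\infty$ for every $\rho>0$, equivalently on which $\frac{1}{|n|}\log K(f^n x)\to 0$ as $|n|\to\infty$.

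This subexponential growth of $K$ along orbits is the main obstacle, and is where boundedness of the cocycle enters. The identity $\A_{fx}^m = \A_x^{m+1}\circ A(x)^{-1}$ combined with the uniform bounds $\|A\|,\|A^{-1}\|\le C$ produces an affine recurrence $K(f^{\pm 1}x)\le D\,K(x)+E$, with constants depending only on $C,\la,\chi,\e$; iterating gives the crude linear bound $K(f^n x)\le D^{|n|}(K(x)+E')$. On the other hand, the sublevel sets $E_N:=\{K\le N\}$ increase to $\La$, so we fix $N$ with $\mu(E_N)>0$. Applying Birkhoff to $\mathbf 1_{E_N}$ on the intervals $[0,n]$ and $[0,(1+\rho')n]$, the number of visits to $E_N$ in $[n,(1+\rho')n]$ grows like $\rho' n\,\mu(E_N)\to\infty$ for $\mu$-a.e.\ $x$; hence for every fixed $\rho'>0$ and all large $n$ there is a visit $m_n$ with $n-m_n\le\rho' n$. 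The affine estimate then gives
$$K(f^n x)\,\le\, D^{n-m_n}\bigl(K(f^{m_n}x)+E'\bigr)\,\le\, D^{\rho' n}(N+E'),$$
so $\limsup\tfrac 1n\log K(f^n x)\le\rho'\log D$. Since $\rho'>0$ is arbitrary the limit is $0$; the case $n\to-\infty$ is symmetric. Taking $\r$ to be the resulting $f$-invariant conull set completes the argument.
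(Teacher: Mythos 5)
Your proof is correct, and for part (ii) it takes a genuinely different and somewhat cleaner route than the paper's. Part (i) is essentially the same reindexing argument, so no comment there.

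For the temperedness in part (ii), the paper bounds $\|\cdot\|_{x,\e}/\|\cdot\|$ by $\tilde\k_\e(x)$ built from $\k_\e(x)=\sup_{n\ge 0}\|\A_x^n\|e^{-(\la+\e)n}$ and $\k'_\e$, then proves temperedness of $\k_\e$ by contradiction (Lemma \ref{tempered}): assuming $\k_\e(f^kx)>e^{sk}$ infinitely often, it extracts a time $n=n(k)$ that must be comparable to $k$, applies Egorov to get a set $Y$ where $\tfrac 1n b_n$ converges uniformly, uses Birkhoff on $\mathbf 1_Y$ to locate a nearby return to $Y$, and derives $sk<sk$. Your argument replaces all of this with the observation that boundedness of the cocycle gives the affine recurrence $K(f^{\pm1}x)\le D\,K(x)+E$ for the series $K(x)$ directly, and then a single application of Birkhoff to $\mathbf 1_{\{K\le N\}}$ locates a return within distance $\rho'n$ of $n$, from which $\limsup\tfrac 1n\log K(f^nx)\le\rho'\log D$ immediately. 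No Egorov, no exponent-comparison bookkeeping, and the contradiction structure disappears. You also use a $\sup$-based tempered majorant $K_\rho(x)=\sup_nK(f^nx)e^{-\rho|n|}$ rather than the paper's $\sum$-based one (via \cite[Lemma 3.5.7]{BP}); both satisfy \eqref{estK}, and yours has the mild advantage of being self-contained. One tiny notational slip: to get both sides of $e^{-\rho}K_\rho(x)\le K_\rho(fx)\le e^{\rho}K_\rho(x)$ you need both $|m-1|\ge|m|-1$ and $|m-1|\le|m|+1$, i.e.\ $\bigl||m-1|-|m|\bigr|\le 1$, not just the one inequality you cite; this is trivial and does not affect the argument.
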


\begin{proof} {\bf (i)} Let $u \in \V_x$. Using definition \eqref{Lprod}
and the fact that $\chi \le \la$  we obtain
$$
\begin{aligned}
& \|\A_x (u)\|_{fx} \, =\,  \sum_{n=0}^\infty \, \|  \A_{fx}^n (\A_x(u))\| \,e^{-(\la +\e) n}
\,+\, \sum_{n=1}^\infty \, \|  \A_{fx}^{-n} (\A_x(u))\| \,e^{(\chi -\e) n} \hskip2.5cm\\
&\, = \, \sum_{n=0}^\infty \, \|  \A_{x}^{n+1} (u)\| \,e^{-(\la +\e) n}
\,+\ \sum_{n=1}^\infty \, \|  \A_{x}^{-n+1} (u))\| \,e^{(\chi -\e) n} 
\end{aligned}
$$
$$
\begin{aligned}
& \, = \,e^{(\la+\e)} \left( \sum_{k=1}^\infty \, \|  \A_{x}^{k} (u)\| \,e^{-(\la +\e) k} 
+ \|u\| e^{(\chi -\e)-(\la+\e)}  +  \sum_{n=2}^\infty \, \|  \A_{x}^{-n+1} (u))\| \,e^{(\chi -\e) n-(\la+\e)} \right) \\
& \, \le \,e^{(\la+\e)} \left( \sum_{k=1}^\infty \, \|  \A_{x}^{k} (u)\| \,e^{-(\la +\e) k} 
+ \|u\|  +  \sum_{n=2}^\infty \, \|  \A_{x}^{-n+1} (u))\| \,e^{(\chi -\e) (n-1)}\right)\\
&\, = \,  e^{(\la+\e)} \left(  \sum_{k=0}^\infty \, \|  \A_{x}^{k} (u)\| \,e^{-(\la +\e) k}
 +    \sum_{k=1}^\infty \, \|  \A_{x}^{-k} (u))\| \,e^{(\chi -\e) k}\right)
 \,= \,  e^{(\la+\e)} \|u\|_x,
\end{aligned}
$$
and the first inequality follows. The second inequality is obtained similarly.

\vskip.3cm

{\bf (ii)} The uniform lower bound $\|u\|_{x,\e}\ge \|u\|$ follows immediately from the definition
of $\|u\|_{x,\e}$,
so it remains to establish the upper bound.
\vskip.1cm

By \eqref{Lambda def} for each $\e>0$ and $x\in \La$ there exists
$N_\e(x) \in \N$ such that 
\begin{equation} \label{A_n}
 \| \A_x ^n \| \le  e^{(\la +\e) n}  \quad\text{and}\quad 
 \| \A_x ^{-n} \| \le  e^{(-\chi +\e) n} \quad\text{for all }
n>N_\e(x). 
\end{equation}
Then for all $n\in \N$
\begin{equation} \label{A_n 2}
 \| \A_x ^n \| \le \k_\e(x) e^{(\la +\e) n}  \quad\text{and}\quad 
 \| \A_x ^{-n} \| \le \k'_\e(x) e^{(-\chi +\e) n} ,\quad\text{where}
\end{equation}
\begin{equation} \label{k_e}
\begin{aligned}
&\k_\e(x)=\max \,\{ \,\| \A_x ^n \| e^{-(\la +\e) n}:\; 0\le n\le N_\e(x) \,\}  \quad\text{and}\\
& \k_\e'(x)=\max \,\{ \,\| \A_x ^ {-n} \| e^{(\chi -\e) n}:\; 0\le n\le N_\e(x) \,\}.
\end{aligned}
\end{equation}
We note that $\, \k_\e(x)\ge  \| \A_x ^0 \| =1$ and,   since
$\| \A_x ^n \| e^{-(\la +\e) n} \le 1$ for all $n > N_\e$, 
$$
\k_\e(x)=\sup \,\{ \,\| \A_x ^n \| e^{-(\la +\e) n}:\; n\ge 0 \,\}.
$$
 It follows, in particular, that the function $\k_\e$ is measurable.
 Similarly, $\k'_\e$ is also measurable.
Using \eqref{A_n 2} with $\e/2$, we estimate $\|u\|_{x,\e}$ as follows.
$$
\begin{aligned}
\|u\|_{x,\e}&  =\,
   \sum_{n=0}^\infty \,  \|  \A_x^n (u)\| \,e^{-(\la +\e) n} 
   + \sum_{n=1}^\infty \, \|  \A_x^{-n} (u)\| \,e^{(\chi -\e) n} \,\le\, \\
& \le\, \sum_{n=0}^\infty \,\|u\|  \, \k_{\e/2}(x) e^{(\la +\e/2) n} \,e^{-(\la +\e) n}
+ \sum_{n=1}^\infty \,\|u\|  \, \k_{\e/2}'(x) e^{(-\chi +\e/2) n} \,e^{(\chi -\e) n} \\
& =  \,\|u\|  \cdot ( \k_{\e/2}(x) +\k_{\e/2}'(x))/ (1-e^{-\e/2}) = :\,\|u\|  \cdot \tilde \k_\e (x).
\end{aligned}
 $$
By  Lemma \ref{tempered} below, the function  $\tilde \k_\e (x)$ is tempered on a set $\r$ of full measure. 
Hence by \cite[Lemma 3.5.7]{BP} for each $ \rho>0$,
 $$
 K_{\rho} (x) =\sum_{n\in \Z} \tilde \k_\e (f^n x) e^{-\rho |n|}
 $$
is a measurable function defined on $\r$ satisfying \eqref{estK}. The inequality 
\eqref{estLnorm} follows since $\tilde \k_\e(x) \le K_{\rho} (x) $.
\end{proof}


\begin{lemma} \label{tempered} 
For each $\e>0$ the functions $\k_\e$ and $\k_\e'$ defined by \eqref{k_e} are 
forward and backward tempered on a set $\r \subset \La$ with $\mu(\r)=1$, that is 
for all $x\in \r$
$$
 \text{\bf (i)} \;\; \lim_{n \to \infty} \frac 1n \log \k_\e(f^nx) =0 \quad\text{ and }\quad
  \text{\bf (ii)}  \;\; \lim_{n \to \infty} \frac 1n \log \k_\e(f^{-n}x) =0
$$
and similarly for $\k_\e'$.
\end{lemma}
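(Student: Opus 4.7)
The four assertions---forward and backward temperedness of $\k_\e$ and $\k_\e'$---are mutually parallel: (ii) follows from (i) by using $f^{-1}$ in place of $f$ (noting that $\mu$ is also $f^{-1}$-invariant and ergodic), and the statements for $\k_\e'$ are obtained by applying the argument to the inverse cocycle $\A^{-1}$ via \eqref{bn} and \eqref{-chi}. I therefore focus on (i) for $\k_\e$.

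Two reductions are immediate. By \eqref{la}, on $\La$ we have $n^{-1}a_n(x)\to\la$, so $a_n(x)-(\la+\e)n\to-\infty$ and the supremum $\log\k_\e(x)=\sup_{n\ge 0}[a_n(x)-(\la+\e)n]$ is attained at a finite index; in particular $\k_\e$ is a.e.\ finite. Since $\k_\e\ge 1$ (take $n=0$), $\log\k_\e\ge 0$ pointwise, so $\liminf_n n^{-1}\log\k_\e(f^n x)\ge 0$ automatically. Only the matching upper bound $\limsup_n n^{-1}\log\k_\e(f^n x)\le 0$ requires argument. For that I would apply Borel--Cantelli: by $f$-invariance of $\mu$, $\mu\{x:\k_\e(f^n x)>e^{\delta n}\}=\mu\{\k_\e>e^{\delta n}\}$ for every $\delta>0$, so the claim follows from $\sum_n\mu\{\log\k_\e>\delta n\}<\infty$, which by the layer-cake representation is equivalent to $\log\k_\e\in L^1(\mu)$.

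Establishing $\int\log\k_\e\,d\mu<\infty$ is the crux and the main obstacle. It amounts to integrability of the running maximum of the subadditive cocycle $\psi_n:=a_n-(\la+\e)n$, which has strictly negative drift $n^{-1}\psi_n\to-\e$ a.e.\ and in $L^1$ by the Subadditive Ergodic Theorem. I would approach this via a maximal inequality for subadditive cocycles with negative drift (a refinement of the Subadditive Ergodic Theorem in the spirit of \cite{KM,GK}), giving $\sup_n\psi_n^+\in L^1$ directly. A more hands-on alternative is to split the supremum at the a.e.-finite index $N_\e(x)$ from \eqref{A_n}: for $n>N_\e(x)$ one has $\psi_n\le 0$ by definition of $N_\e$, and for $n\le N_\e(x)$ the boundedness of $\A$ gives $\psi_n(x)\le C\,N_\e(x)$ for some constant $C$ depending only on $\A$, $\la$ and $\e$. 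This reduces $\log\k_\e\in L^1(\mu)$ to $N_\e\in L^1(\mu)$, obtained in turn from quantitative tail estimates on $\mu\{a_n>(\la+\e)n\}$ deduced from the $L^1$-convergence $a_n/n\to\la$. Once this integrability is settled, the argument closes uniformly for all four assertions.
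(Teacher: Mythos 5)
Your proposal is a genuinely different route from the paper's, and it contains a gap at precisely the step you flag as ``the crux.'' The paper does \emph{not} prove temperedness by establishing $\log\k_\e\in L^1(\mu)$ and invoking Borel--Cantelli. Instead, it argues by contradiction: assuming $\limsup_n \frac1n\log\k_\e(f^nx)>s>0$, it extracts infinitely many pairs $(k,n(k))$ with $\|\A_{f^kx}^{n}\|>e^{sk}e^{(\la+\e)n}$, shows via boundedness of $\A$ that $ck<n<Ck$ with explicit $c,C$, applies Egorov's theorem to get a set $Y$ of measure $>1-\delta$ where $b_n/n\to\la$ uniformly, uses the Birkhoff Ergodic Theorem to locate a time $n'$ with $n-n'$ small and $f^{k+n'}x\in Y$, and from these extracts $sk<(\la'-\la)\delta(C+1)k<sk$ for a suitably small $\delta$. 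This ``Egorov plus Birkhoff'' scheme never requires any integrability of $\log\k_\e$ or of $N_\e$, and this is not an accident.

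The gap in your argument is the reduction to $\log\k_\e\in L^1(\mu)$, equivalently to $N_\e\in L^1(\mu)$. You assert this can be ``obtained in turn from quantitative tail estimates on $\mu\{a_n>(\la+\e)n\}$ deduced from the $L^1$-convergence $a_n/n\to\la$,'' but $L^1$-convergence gives only $\mu\{a_n>(\la+\e)n\}\to 0$ with \emph{no rate whatsoever}, and the Subadditive Ergodic Theorem is well known to furnish no rate in general. What you actually need for Borel--Cantelli is the summability $\sum_n\mu\{\log\k_\e>\delta n\}<\infty$, which is equivalent to $\log\k_\e\in L^1$, and this does not follow from the SET: already for Birkhoff sums of a bounded zero-mean observable over a general ergodic system, the running maximum $\sup_n(S_n-\e n)$ need not be integrable, because there is no a~priori control on the fluctuations beyond a.e.\ and $L^1$ convergence of $S_n/n$. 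Your first alternative (``a maximal inequality for subadditive cocycles with negative drift \ldots in the spirit of \cite{KM,GK}'') is likewise not something those references provide; Karlsson--Margulis and Gou\"ezel--Karlsson produce good hitting times along the orbit, not $L^1$ bounds on the maximal deviation. So the proposal, as written, does not close; the missing ingredient is exactly what the paper's Egorov argument is designed to replace.
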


\begin{proof}
We fix $\e>0$ and give the proof for $\k_\e$. The result for $\k_\e'$ follows
by reversing the time. We note that $\k_\e \ge 1$ so that the sequences in 
(i) and (ii) are nonnegative.
\vskip.1cm 

{\bf (i)} First we show that $\k_\e$ is forward tempered. We consider $s>0$ and suppose that for some $x\in \La$
\begin{equation}\label{>s}
\limsup \, \frac 1n \log \k_\e(f^nx) >s>0.
\end{equation}
Then there exist infinitely many $k\in \N$ such that $\k_\e(f^k x) > e^{s k}$, 
and for any such $k$ there exists $n=n(k)$ such that 
\begin{equation} \label{sk}
  \| \A_{f^k x} ^n \| = \k_\e(f^k x)e^{(\la +\e) n} >  e^{s k} e^{(\la +\e) n}.
\end{equation}
Since the cocycle $\A$ is bounded,
there exist constants $\lambda'>\la$ and  $\chi' < \chi$ such that 
\begin{equation}\label{prime}
 \| \A_x \| \le e^{\lambda'} \; \text{ and}\quad  \| \A_x^{-1} \| \le e^{-\chi'}
 \;\text{ for all }x\in \M.
\end{equation}
First we show that if $k$ and $n$ satisfy inequalities \eqref{sk} and $k> N_{\e/2}(x)$,
where $N_{\e/2}(x)$ is defined as in \eqref{A_n}, then they must be comparable, more precisely,  
\begin{equation} \label{cC}
  ck < n < Ck ,\quad \text{where}\quad C=1+2(\la -\chi')/\e \quad 
  \text{and}\quad c=s/(\la' -\la).
\end{equation}
Indeed, for such $k$ inequality \eqref{A_n} implies  
$\| \A_x ^{n+k} \| \le  e^{(\la +\e/2) (n+k)}$ so we obtain 
$$
 \| \A_{f^k x} ^n \| = \| \A_x ^{n+k} \circ (\A_ x ^k)^{-1}\| \le
\| \A_x ^{n+k} \| \cdot \| (\A_ x ^k)^{-1}\| \le  e^{(\la +\e/2) (n+k)}\, e^{-\chi' k}.
$$
Together with \eqref{sk} this yields
$\,e^{s k} e^{(\la +\e) n}\le e^{(\la +\e/2) (n+k)}\, e^{-\chi' k}$ and so
$$
 n\le k\cdot 2(\la -\chi'-s+\e/2)/\e < Ck.
$$
On the other hand, \eqref{sk} and $\| \A_{f^k x} ^n \| \le e^{\lambda' n} $
imply $e^{s k} e^{(\la +\e) n} \le e^{\lambda' n} $ and so 
$$
 n\ge k\cdot s/(\la' -\la-\e)>ck.
$$

Now we show that there exists a set of full measure such that for each $x$ 
in this set \eqref{>s} leads to a contradiction.
We choose $\delta=\delta(s)$ so that  
\begin{equation}\label{delta}
0<\delta< c/(C+1),
\quad\text{where $c$ and $C$ are as in \eqref{cC}.}
\end{equation}
By Egorov's theorem, there exists a set $Y=Y(\delta)$ with $\mu(Y)> 1-\delta$ so that 
the convergence in \eqref{bn} is uniform on $Y$, and thus there exists 
an integer $N'_\e=N'_\e(Y)$ such that
\begin{equation}\label{N'_e}
b_n(y) < (\la +\e)n \quad\text{for all }y\in Y \text{ and }n >N'_\e.
\end{equation}
 Let $\La_Y$ be the set of full $\mu$ measure on which Birkhoff Ergodic 
 Theorem holds for the indicator function of $Y$, i.e. for all $x \in \La_Y$ 
$$
\lim_{N \to \infty} \frac 1N\,\# \{\ell:  1 \le \ell  \le N \text{ and } f^\ell x \in Y \} = \mu(Y)> 1-\delta.
$$
Now we consider $x \in \La \cap \La_Y$. Then for all sufficiently large  $k$ we have
$$
 \# \{\ell:  1 \le \ell  \le (k+n) \text{ and } f^\ell x \notin Y \} < \delta (k+n).
$$
Hence there exists $n'\le n$ with $n-n'< \delta (k+n)$ such that $y=f^{k+n'}x \in Y$.
Using \eqref{cC} we get 
\begin{equation}\label{n'}
n-n'< \delta (k+n) <\delta (k+Ck) =\delta (C+1) k .
\end{equation}
Using again \eqref{cC}  we obtain
$$n'= n - (n-n') > ck -  \delta (k+Ck)=(c-\delta (C+1))k,
$$  
where $c-\delta (C+1)>0$ by \eqref{delta}.
Therefore, since $k$ can be chosen arbitrarily large, we can assume that $n'>N'_\e$.
Then, as $y=f^{k+n'}x \in Y$, we can estimate
$$
\|\A^n_{f^kx}\| \le \|\A^{n'}_{f^kx}\| \cdot \|\A^{n-n'}_{f^{k+n'}x}\| 
\le e^{b_{n'}(f^{k+n'}x)} e^{\la'(n-n')} < e^{(\la+\e)n'}  e^{\la'(n-n')}.
$$
Combining this with \eqref{sk}, we obtain
$$
   e^{s k} e^{(\la +\e) n}<\| \A_{f^k x} ^n \|  < e^{(\la+\e)n'}  e^{\la'(n-n')}
$$
so that $sk<(\lambda'-\la-\e)(n-n')$. Then using \eqref{n'}, \eqref{delta}, and
\eqref{cC}  we obtain a contradiction
$$
sk < (\lambda'-\la-\e)\delta k(C+1) < (\lambda'-\la) ck =sk.
$$
Thus for each $s>0$ there exists a full measure set $\La_Y=\La_Y(s)$ such that
$$
\limsup \frac1n \log \k_\e(f^n x) \le s\quad\text{for all }x\in \La \cap \La_Y,
$$
and the first part of the lemma follows by taking the intersection of $\La$ and $\La_Y (1/n)$, $n\in \N$.
\vskip.2cm

{\bf (ii)}  Now we show that $\k_\e$ is backward tempered using 
a similar argument. Suppose that for some $x\in \La$
$$
 \limsup \, \frac 1n \log \k_\e(f^{-n}x) >s>0.
$$
Then  $\k_\e(f^{-k} x) > e^{s k}$ for infinitely many $k\in \N$, 
and hence for some $n=n(k)$
\begin{equation} \label{sk 2}
  \| \A_{f^{-k} x} ^n \|  >  e^{s k} e^{(\la +\e) n}.
\end{equation}
By \eqref{bn} there exists $N'_\e(x)$ such that $b_k(x) < e^{(\la+\e)k}$ for  
$k>N'_\e(x)$. If $n>k>N'_\e(x)$ then we have 
$$
\begin{aligned}
   \| \A_{f^{-k} x} ^n \| & \le  \| \A_{f^{-k} x} ^k \| \cdot  \| \A_{ x} ^{n-k} \| \le
   e^{b_k(x)} \k_\e(x) e^{(\la+\e)(n-k)} \\
   & \le \k_\e(x) e^{(\la+\e)k} e^{(\la+\e)(n-k)}=
   \k_\e(x) e^{(\la+\e)n}, 
   \end{aligned}
$$
which is incompatible with \eqref{sk 2} for large $k$, and  hence $n\le k$.
Also, \eqref{sk 2} and $\| \A_{f^{-k} x} ^n \| \le e^{\lambda' n} $ imply 
$e^{s k} e^{(\la +\e) n} \le e^{\lambda' n} $ and so $n>ck$, where $c=s/(\la' -\la)$
as in \eqref{cC}. We conclude that for large $k$, any $n=n(k)$ in \eqref{sk 2} satisfies $ck < n \le k$.

\vskip.1cm 

We choose $\delta=\delta(s)$ so that $0<\delta< c$ and
take sets $Y=Y(\delta)$ with $\mu(Y)> 1-\delta$ and $\La_Y$ as in the first part. 
Then if $x \in \La \cap \La_Y$ and $k$ is sufficiently large there exists $n'\le n$ 
with $n-n'<\delta k$ such that $y'=f^{-k+n'} x\in Y$. Since $\delta<c$
and  $n'= n - (n-n') > ck -  \delta k= (c-\delta )k$ we conclude that $n'>N'_\e$
if $k$ is sufficiently large. Thus for such $k$ we can estimate as before
$$
\|\A^n_{f^{-k}x}\| \le \|\A^{n'}_{f^{-k}x}\| \cdot \|\A^{n-n'}_{f^{-k+n'}x}\| 
\le e^{b_{n'}(f^{-k+n'}x)} e^{\la'(n-n')} < e^{(\la+\e)n'}  e^{\la'(n-n')}.
$$
Combining this with \eqref{sk 2}, we obtain
$$
 e^{s k} e^{(\la +\e) n} <   \| \A_{f^{-k} x} ^n \|  <e^{(\la+\e)n'}  e^{\la'(n-n')}  ,
$$
which yields  a contradiction as $n-n'<\delta k$ and $\delta< c=s/(\la' -\la)$:
$$sk < (\lambda'-\la-\e)(n-n') < (\lambda'-\la) \delta k  =sk.
$$
Thus on the  full measure set $\La \cap \La_Y$ we have
$
\limsup \frac1n \log \k_\e(f^{-n} x) \le s,
$
and the second part of the lemma follows.
\end{proof}

\vskip.5cm


\section{Proof of Theorem \ref{main}}

\subsection{Preliminary results.}
We fix $\e>0$ and consider the corresponding Lyapunov norm 
$\| .\|_x=\|.\|_{x,\e}$ given by \eqref{Lprod}. 
We apply Proposition \ref{Lnorm properties} (ii)  with 
$\rho=\e$ and obtain a full measure set $\r\subset\La$  where the function
$K=K_\e$ satisfies \eqref{estLnorm} and \eqref{estK}.
For any $\ell >1$ we  define 
\begin{equation}  \label{Pset}
\r_\ell = \{x \in \r : \; \; K(x) \le \ell \},
\end{equation}
and note that $\mu (\r_\ell) \to 1$ as $\ell \to \infty$.

We recall that
$\| \A_x^n \|_{x_n \leftarrow x_0} \le  e^{n (\la + \e)}$ and
$\| (\A_x^n)^{-1} \|_{x_0 \leftarrow x_n} \le  e^{n (-\chi + \e)}$  for $x\in \La$.
In Lemma \eqref{mainest} we obtain similar estimates for {\em any} point $y\in X$ whose trajectory is close to that of a point $x\in \r _\ell$. Since the Lyapunov
norm may not exist at points $f^ny$ we will use the Lyapunov norms at the corresponding points  $f^nx$ for the estimates. Since the bundle $\V$ is trivial
this creates no problem. For a non-trivial bundle one would need to identify
spaces $\V_{f^n x}$ and $\V_{f^n y}$, which can be done for nearby points.

\begin{lemma} \label{mainest}
Let $f$ be an ergodic invertible measure-preserving transformation 
of a probability space $(X,\mu)$ and let $\A$ be a bounded  $\a$-H\"older 
Banach cocycle over $f$ with the upper Lyapunov exponent $\la$
and the lower Lyapunov exponent $\chi$. 

Then for any  $\g > \e /\a$ there exists a constant  $c=c(\A, \,\a \g - \e)$ such that 
for  any  point $x$  in $\r_\ell$ with $f^m x$ in $\r_\ell$
and any point $y \in X$ such that the orbit segments $x, fx, ... , f^m x$ and 
$\,y, fy, ... , f^m y$ satisfy with some $\delta>0$
\begin{equation}  \label{d-close}
\dist (f^i x, f^i y) \le \delta e^{ -\g\, \min\{i,\,m-i \} }
\quad\text{for every }i=0, ... , m
\end{equation}
we have for all $ \,0\le n\le m$
\begin{equation}  \label{d-close-coc}
  \| \A_y^n \| \le \ell\, \| \A_y^n \|_{x_n \leftarrow x_0} \le 
\ell \, e^{c \,  \ell \delta^\a} e^{n (\la + \e)} \quad\text{ and}
\end{equation}
\begin{equation}  \label{d-close-coc-1}
  \| (\A_y^n)^{-1} \| \le \ell e^{\e \min\{n,\,m-n \}} \| (\A_y^n)^{-1} \|_{x_0 \leftarrow x_n} \le 
\ell e^{\e \min\{n,\,m-n \}} e^{c \,  \ell \delta^\a} e^{n (-\chi + \e)}.
\end{equation}

\end{lemma}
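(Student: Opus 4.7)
\smallskip
\noindent\textbf{Proof plan.}

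The plan is to estimate the one-step operator $A(f^iy)$ with respect to the Lyapunov norms at the $x$-orbit, and then multiply the bounds over $i=0,\dots,n-1$. Write
$$A(f^iy)=A(f^ix)+\bigl(A(f^iy)-A(f^ix)\bigr),$$
view both sides as operators from $\V_{f^ix}$ to $\V_{f^{i+1}x}$, and use the triangle inequality for the operator norm $\|\cdot\|_{f^{i+1}x\leftarrow f^ix}$. By Proposition \ref{Lnorm properties}(i) the first piece contributes at most $e^{\la+\e}$, so I would write
$$\|A(f^iy)\|_{f^{i+1}x\leftarrow f^ix}\le e^{\la+\e}\bigl(1+\eta_i\bigr),\quad\eta_i:=e^{-(\la+\e)}\|A(f^iy)-A(f^ix)\|_{f^{i+1}x\leftarrow f^ix}.$$
A symmetric statement for $A(f^iy)^{-1}$ uses $\|A(f^ix)^{-1}\|_{f^ix\leftarrow f^{i+1}x}\le e^{-\chi+\e}$ together with the fact that H\"older continuity of $A$ in the metric $d$ on $GL(V)$ built into \eqref{holder} controls also the inverse.

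Next I would convert $\eta_i$ to a usable bound via $\|\cdot\|_{y,\e}\le K(y)\|\cdot\|$: the operator norm w.r.t.\ Lyapunov norms is at most $K(f^{i+1}x)$ times the standard operator norm. The temperedness estimate \eqref{estK} with $\rho=\e$, applied from both endpoints (which lie in $\r_\ell$), yields
$$K(f^ix)\le \ell\, e^{\e\min\{i,m-i\}}\qquad(0\le i\le m).$$
Combining this with the H\"older bound $\|A(f^iy)-A(f^ix)\|\le M\,\dist(f^iy,f^ix)^\a\le M\delta^\a e^{-\a\g\min\{i,m-i\}}$ and the hypothesis \eqref{d-close} gives
$$\eta_i\le e^{-(\la+\e)}M\ell\,\delta^\a\, e^{\e}\, e^{-(\a\g-\e)\min\{i,m-i\}}.$$
The key point is that $\a\g-\e>0$ by the assumption $\g>\e/\a$, so summing over $i$ produces a geometric series:
$$\sum_{i=0}^{m-1}\eta_i\le M\ell\,\delta^\a\, e^{-\la}\cdot\frac{2}{1-e^{-(\a\g-\e)}}=:\,c\,\ell\,\delta^\a,$$
with the constant $c=c(\A,\a\g-\e)$ depending only on the H\"older data of $\A$ and the gap $\a\g-\e$. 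Taking the product $\prod_{i=0}^{n-1}(1+\eta_i)\le\exp(\sum_i\eta_i)$ and pulling out the $n$ copies of $e^{\la+\e}$ gives
$$\|\A_y^n\|_{x_n\leftarrow x_0}\le e^{n(\la+\e)}\exp(c\ell\delta^\a),$$
which is the right-hand inequality of \eqref{d-close-coc}; the analogous multiplication for the inverses produces the right-hand inequality of \eqref{d-close-coc-1}.

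Finally I would pass from Lyapunov norm to standard norm. For \eqref{d-close-coc}, apply $\A_y^n$ to a unit vector $v\in\V_{x_0}=V$ and use $\|v\|_{x_0}\le K(x_0)\|v\|\le\ell$ together with $\|\A_y^n v\|\le\|\A_y^n v\|_{x_n}$ from \eqref{estLnorm} (valid since $x_n\in\r$ by $f$-invariance of $\r$); this yields $\|\A_y^n\|\le\ell\,\|\A_y^n\|_{x_n\leftarrow x_0}$. For \eqref{d-close-coc-1}, the same device applied to $(\A_y^n)^{-1}$ starting in $\V_{x_n}$ gives $\|(\A_y^n)^{-1}\|\le K(x_n)\,\|(\A_y^n)^{-1}\|_{x_0\leftarrow x_n}$, and now $K(x_n)\le \ell\,e^{\e\min\{n,m-n\}}$ is exactly what accounts for the additional $e^{\e\min\{n,m-n\}}$ in \eqref{d-close-coc-1}.

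The main obstacle I anticipate is organizing the bookkeeping so that $c$ depends only on $\A$ and the gap $\a\g-\e$, not on $\ell$, $\delta$, $m$, or the endpoint that happens to be closer: this is precisely where the symmetric exponent $\min\{i,m-i\}$ in the closing hypothesis is used together with the two-sided temperedness of $K$ at both $x$ and $f^mx$. The asymmetric factor $e^{\e\min\{n,m-n\}}$ in \eqref{d-close-coc-1} but not in \eqref{d-close-coc} comes solely from the last step, namely whether $K$ is evaluated at an endpoint of the orbit segment (giving just $\ell$) or at an interior point $x_n$ (giving $\ell\,e^{\e\min\{n,m-n\}}$).
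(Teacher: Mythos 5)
Your proposal is correct and follows essentially the same path as the paper: estimate the one-step Lyapunov operator norm $\|A(f^iy)\|_{x_{i+1}\leftarrow x_i}$ by $e^{\la+\e}(1+\eta_i)$, control $\eta_i$ via H\"older continuity of $A$, the symmetric closing bound, and the temperedness $K(x_i)\le \ell e^{\e\min\{i,m-i\}}$, then sum the resulting geometric series and pass to the standard norm via $K(x_0)\le\ell$ (resp.\ $K(x_n)$). The only cosmetic difference is that you use the additive split $A(f^iy)=A(f^ix)+(A(f^iy)-A(f^ix))$, while the paper factors multiplicatively $\A_{y_i}=\A_{x_i}\circ\bigl(\Id+\Delta_i\bigr)$ with $\Delta_i=(\A_{x_i})^{-1}\A_{y_i}-\Id$; the latter makes the constant $c$ manifestly independent of $\la$ (depending only on $M$, $\sup\|\A_x^{-1}\|$, and $\a\g-\e$), whereas your version carries an inessential $e^{-\la}$ factor, which is harmless here since $\mu$, hence $\la$, is fixed.
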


\vskip.2cm

\begin{proof} 
First we prove \eqref{d-close-coc}. We denote 
$$
x_i=f^i x \quad\text{and}\quad y_i=f^i y,\quad  i=0, ... , m, 
$$
and
use \eqref{estAnorm} to estimate the Lyapunov norm for $0<n\le m$
\begin{equation} \label{mainest1}
\begin{aligned}
\| \A_y^n\|_{x_n \leftarrow x_0} \,& \le\,
\prod_{i=0}^{n-1} \| \A_{y_i} \|_{x_{i+1} \leftarrow x_i} \,\le\,
\prod_{i=0}^{n-1} \| \A_{x_i} \|_{x_{i+1} \leftarrow x_i} \cdot
\| (\A_{x_i})^{-1} \circ \A_{y_i} \|_{x_{i} \leftarrow x_i} \\
& \le \, e^{n (\la + \e)} \,
\prod_{i=0}^{n-1}  \| (\A_{x_i})^{-1} \circ \A_{y_i}  \|_{x_i \leftarrow x_i}. 
\end{aligned}
\end{equation}  
We consider $\Delta_i =  (\A_{x_i})^{-1} \circ \A_{y_i}  - \Id$.
Since $\A_x$ is $\a$-H\"older \eqref{holder} and $\| (\A_{x})^{-1} \|$ 
is uniformly bounded we obtain 
\begin{equation} \label{mainestDi}
\| \Delta_i \| \le \| (A_{x_i})^{-1} \| \cdot  \| \A_{y_i} - \A_{x_i} \| 
\le M' \dist (x_i, y_i) ^ \a \le M' \left( \delta e^{-\g \min \{i, m-i\} } \right)^ \a ,
\end{equation}
where the constant $M'$ depends only on the cocycle $\A$.

Since both $x$ 
and $f^m x$ are in $\r_\ell$ we have $K(x_i) \le \ell e^{\e \min\{i,m-i \}}$
by \eqref{estK} and \eqref{Pset}. 
Also, for any  points $x,y \in \r$ the   inequality \eqref{estLnorm}  yields 
\begin{equation}  \label{estMnorm}
 \| A \|_{y \leftarrow x} \le K (y) \| A \| \qquad \text{and} \qquad
\| A \| \le K(x)  \| A \|_{y \leftarrow x} .
\end{equation}
Using the first inequality we conclude that
$$
\| \Delta_i \|_{x_i \leftarrow x_i} \le  K(x_i) \|  \Delta_i \| \le \ell e^{\e \min\{i,m-i \}} \,   \| \Delta_i \|
\le \ell e^{\e \min\{i,m-i \}} \, M' \delta^ \a e^{-\g \a \min \{i, m-i\} } 
$$  
$$
\text{and} \quad \|(\A_{x_i})^{-1} \circ \A_{y_i} \|_{x_i \leftarrow x_i} \le 1 + \| \Delta_i \|_{x_i \leftarrow x_i} \le
1 +  M' \ell  \,  \delta^\a \, e^{(\e -\a \g) \, \min\{i,m-i \} } . 
$$   
Combining this with \eqref{mainest1}  we obtain
$$
\begin{aligned}
& \log (\| \A_y^n \|_{x_n \leftarrow x_0}) - n (\la + \e) \le\, 
\sum _{i=0}^{n-1} \log \, ( \| (\A_{x_i})^{-1} \circ \A_{y_i} \|_{x_i \leftarrow x_i}) \\
& \le M' \ell \delta^\a \sum _{i=0}^{n-1}  e^{(\e -\a \g) \, \min\{i,m-i \} }
\le M' \ell \delta^\a\cdot 2\sum _{i=0}^{\infty}  e^{(\e -\a \g)i}
= c \,  \ell \delta^\a
\end{aligned}
$$
since $\e < \a \g$. The constant $c$ depends only on the cocycle $\A$ 
and on $(\a \g - \e)$. We conclude using \eqref{estAnorm} that 
\begin{equation} \label{mainestL}
\| \A_y^n \|_{x_n \leftarrow x_0} \le e^{c \,  \ell \delta^\a} e^{n (\la + \e)}.
\end{equation} 
Since $K(x_0) \le \ell$ we can also estimate the standard norm 
using the second  inequality in \eqref{estMnorm} 
$$
 \| \A_y^n \| \le K(x_0) \| \A_y^n \|_{x_n \leftarrow x_0} \le 
\ell e^{c \,  \ell \delta^\a} e^{n (\la + \e)}. 
$$

The proof of \eqref{d-close-coc-1} is similar. The previous argument with 
$\A_x$ replaced by $(\A_x)^{-1}$ yields
$$
\| (\A_y^n)^{-1} \|_{x_0 \leftarrow x_n} \le  e^{c \,  \ell \delta^\a} e^{n (-\chi + \e)}.
$$
Then the standard norm can be estimated as 
$ \| (\A_y^n)^{-1} \| \le K(x_n) \| (\A_y^n)^{-1} \|_{x_0 \leftarrow x_n} $
and \eqref{d-close-coc-1} follows since $K(x_n) \le \ell e^{\e \min\{n,\,m-n \}} $.
\end{proof}

\vskip.1cm

In the proof of the theorem we will also use the following results by 
A. Karlsson and G. Margulis and by M. Guysinsky.

\begin{proposition}\cite[Proposition 4.2]{KM} \label{KM}
Let $a_n(x)$ be an integrable subadditive cocycle with exponent 
$\la >-\infty$ over an ergodic measure-preserving system $(X,f,\mu)$.
Then there exists a set  $E\subset X$ with $\mu(E)=1$ such that for each 
$x\in E$ and each $\e>0$ there exists an integer $L=L(x,\e)$ and infinitely 
many $n$  such that 
\begin{equation}\label{good n}
  a_n(x)-a_{n-i} (f^ix) \ge (\la-\e)i \quad\text{for all }\, i \text{ with } L\le i \le n.
\end{equation}
\end{proposition}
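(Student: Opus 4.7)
The proposition is quoted verbatim from Karlsson--Margulis [KM], so the task is to recall the essence of their argument. By subtracting $\la n$ from $a_n$, which preserves subadditivity, one may reduce to $\la=0$; set $b_n(x)=a_n(x)+\e n$. The subadditive ergodic theorem gives $b_n(x)/n\to\e>0$ for $\mu$-a.e.\ $x$, so $b_n(x)\to+\infty$ on a set $E_0$ of full measure. The desired inequality then becomes $b_n(x)\ge b_{n-i}(f^ix)$ for all $L\le i\le n$ and infinitely many $n$: that is, $b_n(x)$ should dominate the ``shifted'' values $b_{n-i}(f^ix)$ uniformly in $i$ past a fixed threshold $L=L(x,\e)$.

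The point to appreciate is that subadditivity $b_n(x)\le b_i(x)+b_{n-i}(f^ix)$ runs in the \emph{wrong} direction, so the lower bound on $b_n(x)-b_{n-i}(f^ix)$ cannot come from subadditivity alone and must be extracted from ergodic behaviour. I would proceed in three steps: (a) apply Egorov's theorem to the convergence $b_m/m\to\e$ to find, for each $\eta>0$, a set $Y=Y(\eta)$ with $\mu(Y)>1-\eta$ and an integer $N_\eta$ such that $b_m(y)\le(\e+\eta)m$ for every $y\in Y$ and $m\ge N_\eta$; (b) invoke Birkhoff's theorem for $\mathbf{1}_Y$ to ensure that, for $\mu$-a.e.\ $x$ and all large $n$, the orbit segment $\{f^ix:0\le i\le n\}$ visits $Y$ with density at least $1-2\eta$; (c) select $n$ along the subsequence of density one on which $b_n(x)\ge(\e/2)n$, available for each $x\in E_0$.

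The principal obstacle is uniformity in $i$: the target inequality must hold simultaneously for all $L\le i\le n$, while the pointwise ergodic information only controls each individual orbit point $f^ix$. To pass from the ``good'' indices $i$ (those with $f^ix\in Y$) to the neighbouring ones, I would use the subadditive bound $a_{n-i}(f^ix)\le a_{i'-i}(f^ix)+a_{n-i'}(f^{i'}x)$ together with integrability of $a_1$, which controls the short gap $a_{i'-i}(f^ix)\le\sum_{j=i}^{i'-1}a_1(f^jx)$ via a Birkhoff average; the boundary range $i\in[n-N_\eta,n]$ is handled by the same short-gap estimate. Combining (a)--(c) with these gap controls yields, for $\mu$-a.e.\ $x$, infinitely many $n$ for which $b_n(x)\ge b_{n-i}(f^ix)$ uniformly on $[L,n]$, which is the conclusion of the proposition once translated back to $a_n$. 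This last combinatorial step---patching the sparse ``good'' indices $i$ with the short-gap estimate into a uniform bound on the whole range $[L,n]$---is the delicate point of the argument.
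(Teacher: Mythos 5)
The paper gives no proof of this proposition; it is quoted verbatim and cited directly to Karlsson--Margulis, so what you are offering is a reconstruction of their argument, and the reconstruction has a genuine gap in step (c). Choosing $n$ so that $b_n(x)\ge(\e/2)n$ is far too weak a selection criterion: after your normalization $b_n(x)/n\to\e$ almost surely, so \emph{every} sufficiently large $n$ satisfies $b_n(x)\ge(\e/2)n$ and no selection has actually taken place. Meanwhile the target inequality $b_n(x)\ge b_{n-i}(f^ix)$ is an extremal condition, and the intermediate values $b_{n-i}(f^ix)$ are themselves of size roughly $\e(n-i)$. For $i$ of constant order this is essentially $\e n$, which comfortably exceeds $(\e/2)n$. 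Making this precise with your own bounds: Egorov gives $b_{n-i}(f^ix)\le(\e+\eta)(n-i)$ on good indices while the ergodic theorem gives $b_n(x)\ge(\e-\eta)n$, so the desired inequality $b_n(x)\ge b_{n-i}(f^ix)$ reduces to $i\ge\frac{2\eta}{\e+\eta}\,n$, a fixed positive \emph{fraction} of $n$, not a fixed threshold $L=L(x,\e)$. Shrinking $\eta$ does not help; you always lose the whole range $L\le i\ll n$. The short-gap patching does not repair this either: that device handles the replacement of a bad $f^ix$ by a nearby good $f^{i'}x$, but the failure above is already present at the good indices, before any patching is invoked.

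What the Karlsson--Margulis argument actually does is select $n$ so that $b_n(x)$ is a near-record relative to the intermediate values, which is a stronger and genuinely different mechanism than bounding $b_n(x)$ from below and $b_{n-i}(f^ix)$ from above separately. Roughly, one argues by contradiction: if on a positive-measure set, for all large $n$, there were some $i(n)\ge L$ with $a_n(x)<a_{n-i(n)}(f^{i(n)}x)+(\la-\e)\,i(n)$, one iterates along $n\mapsto n-i(n)$, $x\mapsto f^{i(n)}x$, telescopes the strict inequalities, and compares with $a_n(x)/n\to\la$ to arrive at $\la\le\la-\e$. Egorov and Birkhoff do appear in that proof, but only to ensure that the iteration stays in a set where a uniform threshold is available, not to provide the extremal $n$. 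You correctly identified the obstacle (uniformity in $i$), but the density-plus-gap-patching route does not overcome it; the essential missing idea is the record-type selection of $n$.
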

\vskip.2cm

\begin{lemma}\cite[Lemma 8]{G} \label{G}
Let $f:X\to X$ be a homeomorphism
preserving an ergodic Borel probability measure $\mu$. 
Then there exists a set P with $\mu(P)=1$ such that 
for each $x\in P$ and  $\e, \delta >0$ there exists an integer 
 $N=N(x,\e,\delta)$ such that 
 if $n > N$ then there is an integer $k$ with 
 $$
 n(1 + \e) < k < n(1 + 2\e) 
\quad\text{and}\quad  \dist(x, f^k x) < \delta.
$$
\end{lemma}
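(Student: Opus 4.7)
The plan is to prove Lemma \ref{G} by a direct application of the Birkhoff ergodic theorem to the indicator function of a small neighborhood of $x$. The key observation is that for $\mu$-typical $x$ the orbit visits any sufficiently small ball around $x$ with positive asymptotic density, so any window of return times whose length grows linearly in $n$ must eventually contain such a visit.

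First, assuming $X$ is separable (which is the setting of Theorem \ref{main}), I would fix a countable base $\{U_j\}_{j\in\N}$ of the topology, for instance the open balls of rational radius centered at points of a countable dense subset. Let $P_1 := \mathrm{supp}\,\mu$; this has full $\mu$-measure since its complement is the union of all open $\mu$-null sets, and by the Lindel\"of property this union is countable. Let $P_2$ be the full-measure set on which Birkhoff's theorem holds for every $\chi_{U_j}$, and set $P := P_1 \cap P_2$, which is of full $\mu$-measure.

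Now fix $x \in P$ together with $\e,\delta > 0$. Since $x \in \mathrm{supp}\,\mu$, I can pick some $U_j$ containing $x$ with $\diam U_j < \delta$ and $m := \mu(U_j) > 0$. Applying Birkhoff for $\chi_{U_j}$ at $x$ with $\eta := m\e/8(1+\e)$, there is $N_0$ such that
$$\left| \tfrac{1}{M} \#\{1 \le i \le M : f^i x \in U_j\} - m \right| < \eta \quad \text{for all } M \ge N_0.$$
Choose $N$ with $N(1+\e) \ge N_0$. For $n > N$, subtracting the Birkhoff counts on $[1,\lfloor n(1+\e)\rfloor]$ from those on $[1,\lfloor n(1+2\e)\rfloor]$ gives
$$\#\{k : n(1+\e) < k < n(1+2\e),\ f^k x \in U_j\} \;\ge\; m n \e - O(\eta n) - O(1),$$
which is strictly positive for large $n$. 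Any such $k$ satisfies $x, f^k x \in U_j$, whence $\dist(x, f^k x) < \diam U_j < \delta$, as required.

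The only subtle point in the argument is securing $\mu(U_j) > 0$, which is exactly what the restriction to $\mathrm{supp}\,\mu$ provides; everything else is routine counting via the pointwise ergodic theorem, and neither hyperbolicity nor the closing property of $f$ enters. The resulting $N$ naturally depends on $x$, $\e$, and $\delta$ through the choice of $U_j$, the Birkhoff convergence rate at $x$, and the measure $m = \mu(U_j)$, matching the form of the statement.
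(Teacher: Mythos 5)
The paper cites this lemma from Guysinsky's preprint [G] and does not reproduce its proof, so there is no in-text argument to compare against. Your proof is correct and is the natural one: pass to the support of $\mu$ (full measure by Lindel\"of), choose a basic open neighborhood $U_j \ni x$ of diameter $<\delta$ and positive measure $m$, and use the Birkhoff theorem for $\chi_{U_j}$ to see that the window $(n(1+\e),\,n(1+2\e))$, which has length $\sim n\e$, must contain a return time once $n$ is large enough that the linear main term $mn\e$ dominates the error. The arithmetic with $\eta = m\e/(8(1+\e))$ works (any $\eta < m\e/(2+3\e)$ would do), and the only small loose end is that your chosen $N$ with $N(1+\e)\ge N_0$ should also be taken large enough to absorb the $O(1)$ floor/endpoint discrepancies so that the count is actually positive for \emph{all} $n>N$, not merely for large $n$; this is a one-line enlargement of $N$. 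Invoking separability is legitimate since Theorem \ref{main} assumes it (and in any case the support of a Borel probability measure on a metric space is separable).
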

\vskip.2cm


\subsection{Finding $\,p\,$ such that 
$\,| \,\lambda(\A,\mu)- \frac1k \log \| \A_p^k \| \, |<\e$ }$\,$
\vskip.1cm

We fix $0<\e < \min\{1,\a\g/3\}$, the Lyapunov norm $\|.\|_{x,\e}$, the function
$K=K_\e$, and the sets $\r$ and $\r_\ell$ as before.
Without loss of generality we can assume that $\r \subset (E \cap P)$,
where the set $E$ is given by Proposition \ref{KM} with $a_n(x)=\log \|\A^n_x\|$
and $P$ is given by Lemma \ref{G}. We take $\e' = 3\e/(\a\g)$ and choose $\ell$ so that 
$\mu (\r_\ell)> 1- \e'/2$. 
 \vskip.1cm

First we describe the choice of the periodic point $p=f^kp$.
We fix a point $x \in \r_\ell$ for which the Birkhoff Ergodic Theorem holds for the
indicator function of $\r_\ell$:
\begin{equation}  \label{bet}
\lim_{n \to \infty} \frac 1n\,\# \{i:  1 \le i  \le n \text{ and } f^\ell x \in \r_\ell \} 
= \mu(\r_\ell)> 1- \e'/2.
\end{equation}
We fix $L=L(x,\e)$ given by Proposition \ref{KM}.
We then take $\delta>0$ sufficiently small so that \eqref{Cdelta} is satisfied.  
We fix $N=N(x,\e',\delta/D)$ given by Lemma \ref{G}, where $D$ is as in 
the closing property, Definition \ref{closing}.
By Proposition  \ref{KM} there are
arbitrarily large $n$ satisfying \eqref{good n}. We consider such an $n$ greater
than $N$ and $L$. Then by Lemma \ref{G} there exists $k$ such that 
 $n(1 + \e') < k < n(1 + 2\e')$ and $\dist(x, f^k x) < \delta/D$. Then by
 the closing property there exists a periodic point $p=f^kp$ such that
 \begin{equation}  \label{x-p}
\dist (f^i x, f^i p) \le \delta e^{ -\g\, \min\{i,\,k-i \} }
\quad\text{for every }i=0, ... , k.
\end{equation}
By \eqref{bet}, if $n$ is sufficiently large then there exists $m$
 such that $f^m x \in \r_\ell$ and $n \le m \le n(1 + \e') <  k$.
We summarize our choices:

$$
 \begin{aligned}
& \max\{L,N\} < n \le m \le n(1 + \e') <  k < n(1 + 2 \e'), \\ 
& \, n \text{ satisfies \eqref{good n}}, \quad x,f^m x \in \r_\ell, \quad p=f^k p \text{ satisfies \eqref{x-p}}.
\end{aligned}
 $$
 
First we obtain an upper estimate for $\| \A^k_p\|$. 
Since \eqref{x-p} also holds with $m$ in place of $k$, we can apply
 Lemma \ref{mainest} with  $y=p$ to get
  $$
 \| \A_p^m \| \le \ell e^{c \,  \ell \delta^\a} e^{m (\la + \e)}. 
$$
As $\,k-m <2\e'n < 2\e' k,\,$ we have
 $$
 \begin{aligned}
 \| \A_p^k \| &\le  \| \A_p^m \|  \cdot \| \A_{f^m p}^{k-m} \|  \le 
 \ell e^{c \,  \ell \delta^\a} e^{m (\la + \e)} \cdot e^{\la'(k-m)} 
 =\ell e^{c \,  \ell \delta^\a} e^{k (\la + \e)+(\la'-\la-\e) (k-m) } \\
& \le \ell e^{c \,  \ell \delta^\a} e^{k (\la + \e)+(\la'-\la) 2\e' k }.
\end{aligned}
$$
Taking logarithm we obtain
$$
\frac1k \log \| \A_p^k \| \le 
\la + \e+ (\la'-\la)2\e' +\frac1k(\log \ell + c \,  \ell \delta^\a).
$$
Taking $n$, and hence $k$, sufficiently large we conclude that 
\begin{equation}\label{upper}
\frac1k \log \| \A_p^k \| \le \la + \e+ (\la'-\la)2\e' +\e = \la+ 2\e+ 6\e(\la'-\la)/(\a\g).
\end{equation}

\vskip.2cm 

 Now we obtain the lower estimate for $\| \A^k_p\|$. First we bound $\|\A^n_x -\A_p^n \|$.
 $$
\begin{aligned}
\A^n_x -\A_p^n \,&=\, 
\,\A^{n-1}_{x_1}\circ (\A_x -  \A_p) + (\A^{n-1}_{x_1}- \A^{n-1}_{p_1})\circ \A_p \\
& =\, \A^{n-1}_{x_1}\circ (\A_x -  \A_p) + 
\left( (\A^{n-2}_{x_2} \circ (\A_{x_1} - \A_{p_1}) + 
 (\A^{n-2}_{x_2} - \A^{n-2}_{p_2})\circ \A_{p_1}\right) \circ \A_p \\
& = \,  \A^{n-1}_{x_1}\circ (\A_x -  \A_p) + 
\A^{n-2}_{x_2} \circ (\A_{x_1} - \A_{p_1})  \circ \A_p + 
 (\A^{n-2}_{x_2} - \A^{n-2}_{y_2})\circ \A^2_p \\
 & = \dots = \, \sum_{i=0}^{n-1} \,\A^{n-(i+1)}_{x_{i+1}} \circ (\A_{x_i}-\A_{p_i}) \circ \A^i_p.
\end{aligned}
$$
Hence we can estimate the norm as follows
\begin{equation}\label{e1}
 \| \A^n_x -\A_p^n \| \le \sum_{i=0}^{n-1} 
 \,\|\A^{n-(i+1)}_{x_{i+1}}\| \cdot \|\A_{x_i}-\A_{p_i}\| \cdot \|\A^i_p\|.
\end{equation}
 Since $n$ satisfies \eqref{good n} of Proposition \ref{KM}
 with $a_n(x)=\log \|\A^n_x\|$, 
 $$
 a_{n-i}(x_i)\le a_n(x)- (\la-\e)i  \quad\text{for all }\, i \text{ with } L\le i \le n,
 $$
and thus for all such $i$
 $$ 
 \|\A^{n-(i+1)}_{x_{i+1}}\| \le \|\A^n_x\|\, e^{-(i+1)(\la -\e)}.
 $$
Using \eqref{x-p} and H\"older continuity of $\A$ we obtain   
$$
  \|\A_{x_i}-\A_{p_i}\| \le M\,\dist(x_i,p_i)^\a  
  \le M( \delta \, e^{ -\g\, \min\{i,k-i \}} ) ^\a  \le M \delta^\a \, e^{ -\a \g\, \min\{i,k-i \}}. 
 $$
To estimate the exponent we claim that 
$$
\a \g \min\{i,k-i \} \ge 3\e i \quad \text{for } i=1,...,n.
$$
 Indeed, this holds if $i=\min\{i,k-i \}$ as $\e<\a \g/3$. 
 If $k-i=\min\{i,k-i \}$ then 
$$
\a \g(k-i) \ge 3\e i \iff  (3\e + \a\g) i \le \a \g k \iff i \le \frac k{1+3\e/(\a\g)},
$$
which holds for $i \le n$ since $n<k/(1+\e')$ and $\e' = 3\e/(\a\g)$.
Thus we conclude that
\begin{equation}\label{decay}
  \|\A_{x_i}-\A_{p_i}\| \le M \delta^\a \, e^{ -3\e i}  \quad \text{for all } i=1, \dots, n.
\end{equation}
Applying Lemma \ref{mainest} as before, we get $\|\A^i_y\| \le \ell e^{c \,  \ell \delta^\a} e^{i (\la + \e)}$, for all  $i =1,..., m$.
Combining these estimates  we obtain that for $L \le i \le n$
$$
\|\A^{n-(i+1)}_{x_{i+1}}\| \cdot \|\A_{x_i}-\A_{p_i}\| \cdot \|\A^i_p\| \le
$$
$$ 
\le \|\A^n_x\|\, e^{-(i+1)(\la -\e)} \cdot M\delta^\a e^{-3\e i} \cdot
 \ell e^{c \,  \ell \delta^\a} e^{i(\la + \e)} =
 C_1(\delta) \|\A^n_x\|\, e^{-\e i}.
$$
where $C_1(\delta)=\delta^\a M \ell e^{c \,  \ell \delta^\a -\la+\e}$. 
We conclude that 
\begin{equation}\label{e2}
\begin{aligned}
& \sum_{i=L}^{n-1} 
\,\|\A^{n-(i+1)}_{x_{i+1}}\| \cdot \|\A_{x_i}-\A_{p_i}\| \cdot \|\A^i_p\|  \\
& \le \, C_1(\delta) \,\|\A^n_x\|\, \sum_{i=L}^{n-1}  e^{-\e i} 
\,\le\,  C_1(\delta) \,\|\A^n_x\|\, \frac{1}{1-e^{-\e} } \,=\, C_2(\delta) \|\A^n_x\|.
\end{aligned}
\end{equation}
where $ C_2(\delta)= C_1(\delta) (1-e^{-\e})^{-1}$.

For $i<L$ we estimate 
$\|\A^{n-i}_{x_{i}}\|  \le \|\A^n_x\| \cdot \| (\A_{x}^{i})^{-1}\| \le \|\A^n_x\|\, e^{-\chi' i}$,
where $\chi'<\chi$ is such that $\| (\A_x)^{-1}\| \le e^{-\chi'}$ for all $x\in X$.
Hence
\begin{equation}\label{e3}
\begin{aligned}
& \sum_{i=0}^{L-1}  
\|\A^{n-(i+1)}_{x_{i+1}}\| \cdot \|\A_{x_i}-\A_{p_i}\| \cdot \|\A^i_p\| \\
&\le \, \sum_{i=0}^{L-1} \|\A^n_x\|\, e^{-(i+1)\chi'} \cdot M\delta^\a e^{-3\e i} \cdot
 \ell e^{c \,  \ell \delta^\a} e^{i(\la + \e)}  \le C_3(\delta) \, \|\A^n_x\|,
\end{aligned}
\end{equation}
where $C_3(\delta)=Le^{-\chi'+(\la-\chi')L}\, \delta^\a M \ell e^{c \,  \ell \delta^\a }$, as $\la-\chi'>0$.

Combining  estimates \eqref{e1},  \eqref{e2} and  \eqref{e3} we obtain 
 $$
  \| \A^n_x -\A_p^n \| \,\le\, \|\A^n_x\|\, (C_2(\delta)   + C_3(\delta))
  \,\le \, \|\A^n_x\|/2
 $$
 since by the choice of $\delta>0$ we have
 \begin{equation}\label{Cdelta}
 C_2(\delta)   + C_3(\delta) =  
 \delta^\a M \ell e^{c \,  \ell \delta^\a} \left((1-e^{-\e})^{-1}e^{-\la +\e}+Le^{-\chi'+(\la-\chi')L}\right) < 1/2.
\end{equation}
Hence 
$$
 \|\A^n_p\| \,\ge\, \|\A^n_p\| - \| \A^n_x -\A_p^n \| \,\ge\, \|\A^n_x\|/2 
 > e^{(\la -\e)n}/2,
$$
provided that $n$ is sufficiently large for the limit in \eqref{la}. Since
$\A^n_p=(\A^{k-n}_{f^np})^{-1} \circ \A^k_p$,  
$$
\| \A^n_p\| \le \| (\A^{k-n}_{f^np})^{-1}\| \cdot \|\A^k_p\| \le \|\A^k_p\| \, e^{-\chi' (k-n)}.
$$
Hence 
$$
\|\A^k_p\|  \ge e^{\chi' (k-n)} \| \A^n_p\|   >  e^{(\la -\e)n+\chi' (k-n)}/2 >
e^{(\la -\e)k-(\la-\chi') (k-n)}/2,
$$
$$
\text{and so}\quad 
\frac 1k \log \|\A^k_p\| > \frac 1k [(\la -\e)k-(\la -\chi') (k-n) - \log 2].
$$
Since $k-n< 2\e' n <  2\e' k$ we obtain
$$
\frac 1k \log \|\A^k_p\| >  (\la -\e)-(\la -\chi') 2\e' - \log 2 /k> \la -2\e - 6\e(\la -\chi')/(\a\g)
$$
if $n$ and hence $k$ are sufficiently large.

We conclude that for each $\e>0$ there exists $p=f^kp$ satisfying 
this equation as well as \eqref{upper}, and the approximation of 
$\lambda(\A,\mu)$ by $\frac 1k \log \|\A^k_p\|$ follows.


\subsection{Finding $\,p\,$ approximating both $\lambda(\A,\mu)$ and
$\chi(\A,\mu)$} $\,$
\vskip.1cm

We describe how to modify the previous argument to obtain $p$ approximating
both the upper and the lower exponents. The construction of $p$  and the calculations are similar. The main difference is that for the point $x$ we need 
to find arbitrarily large $n$ satisfying both \eqref{good n}  and 
\begin{equation}\label{good n'}
  \ta_n(x)-\ta_{n-i} (f^ix) \ge (-\chi-\e)i \quad\text{for all }\, i \text{ with } L\le i \le n,
\end{equation} 
where $\ta_n(x)$ is from \eqref{tan}. For this we use an advanced version 
of Proposition \ref{KM} due to S. Gou\"ezel and A. Karlsson.

\begin{proposition}\cite[Theorem 1.1 and Remark 1.2]{GK} \label{GK} 
Let $a_n(x)$ be an integrable subadditive cocycle with exponent $\la>-\infty$ 
over an ergodic measure-preserving system $(X,f,\mu)$.
Then for each $\rho>0$ there exists a sequence $\e_i \to 0$ and 
a set  $E_\rho \subset X$
  with $\mu(E_\rho)>1-\rho$ such that for each $x\in E_\rho$ 
  the set $S$ of integers $n$ satisfying 
\begin{equation}\label{good n 2}
  a_n(x)-a_{n-i} (f^ix) \ge (\la-\e_i)i \quad\text{for all }\, i \text{ with } 1\le i \le n
\end{equation}
has asymptotic upper density is greater than $1-\rho$, that is
$$
 \overline{\text{Dens}}\,(S) \overset{\text{def}}{=}\, 
 \limsup \frac1N \left|\, S \cap [0, N-1] \,\right|  > 1-\rho.
$$ 
\end{proposition}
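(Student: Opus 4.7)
The plan is to set $c_n(x) := \la n - a_n(x)$, which turns the subadditivity of $a_n$ into superadditivity $c_n(x) \ge c_i(x) + c_{n-i}(f^i x)$, with $c_n(x)/n \to 0$ $\mu$-almost everywhere by Kingman's theorem. The target bound $a_n(x) - a_{n-i}(f^i x) \ge (\la - \e_i)\, i$ is then equivalent to $c_n(x) - c_{n-i}(f^i x) \le \e_i\, i$, so I need to show that for a density-$(1-\rho)$ set of $n$ the superadditive cocycle $c$ is nearly additive along every backward split $n = i + (n-i)$ with $1 \le i \le n$. In other words, the problem is to quantify how close superadditivity is to being an equality, simultaneously in every cut $i$.

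\textbf{Step 2 (Single-scale density estimate).} For each fixed $\e > 0$ I would call $n$ \emph{$\e$-bad at $x$} if there exists $1 \le i \le n$ with $c_n(x) - c_{n-i}(f^i x) > \e i$, and prove that the upper density of $\e$-bad times tends to $0$ on a set whose measure tends to $1$. The mechanism will be a Vitali-type selection among witness windows: each bad $n$ certifies an ``overshoot'' of $c$ of size $\ge \e i$ on the interval $[n-i, n]$, and extracting a disjoint subfamily whose total length is comparable to the count of bad times in $[0,N)$ would force $c_N(x) \gtrsim \e N$, contradicting $c_N(x)/N \to 0$. Combining this pointwise statement with Egorov yields a uniform density bound on a set of large measure.

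\textbf{Step 3 (Diagonalization, and the main obstacle).} To upgrade from a single $\e$ to a sequence $\e_i \to 0$, I would pick $\e_k \downarrow 0$, apply Step 2 together with Egorov to obtain thresholds $N_k$ and sets $F_k$ with $\mu(F_k) > 1 - \rho\, 2^{-k-1}$ on which the density of $\e_k$-bad $n$ stays $< \rho\, 2^{-k-1}$, then set $E_\rho = \bigcap_k F_k$ (so $\mu(E_\rho) > 1 - \rho$) and define $\e_i = \e_k$ on suitable blocks $i \in [I_k, I_{k+1})$. Any violation of the Step 1 inequality at a scale $i \in [I_k, I_{k+1})$ then forces $n$ to be $\e_k$-bad, so the set of exceptional $n$ on $E_\rho$ is contained in $\bigcup_k \{\e_k\text{-bad times}\}$, and a geometric sum bounds its upper density by $\sum_k \rho\, 2^{-k-1} < \rho$, giving $\overline{\text{Dens}}(S) > 1 - \rho$ as required. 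The main obstacle is the Vitali argument in Step 2: the witness intervals $[n-i, n]$ for different bad $n$ overlap heavily, and extracting a nearly-disjoint subfamily that still captures a positive fraction of them is exactly where the positive-density improvement over Karlsson--Margulis (Proposition \ref{KM}) is earned.
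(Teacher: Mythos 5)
This proposition is quoted by the paper from Gou\"ezel--Karlsson \cite[Theorem 1.1, Remark 1.2]{GK}; the paper gives no proof of its own, so I am assessing your proposal against the result itself rather than against a proof in the text.

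Step 1 is correct: with $c_n(x)=\la n-a_n(x)$ the cocycle $c$ is superadditive, $c_n(x)/n\to 0$ a.e.\ by Kingman, and the target becomes $c_n(x)-c_{n-i}(f^i x)\le \e_i\,i$ for all $1\le i\le n$. However, Steps 2 and 3 contain a genuine gap, and it is not a matter of filling in details. The claim at the heart of Step 2 --- that for a \emph{fixed} $\e>0$ the upper density of $\e$-bad times can be made small on a set whose measure tends to $1$ --- is false. Consider an additive cocycle $a_n(x)=\sum_{j<n}\phi(f^jx)$ with $\phi$ nonconstant and bounded, so $\la=\int\phi\,d\mu$. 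Then $a_n(x)-a_{n-i}(f^ix)=\sum_{j<i}\phi(f^jx)$, and for $i=1$ the $\e$-bad condition reads $\phi(x)<\la-\e$, a condition on $x$ alone, not on $n$. If $\phi(x)<\la-\e$ then \emph{every} $n$ is $\e$-bad, so the set on which the density of $\e$-bad times is small is contained in $\{\phi\ge\la-\e\}$, whose measure does \emph{not} tend to $1$ as $\e\to0$. Consequently Step 3 breaks down: you cannot manufacture sets $F_k$ with $\mu(F_k)>1-\rho 2^{-k-1}$ on which the density of $\e_k$-bad times is $<\rho 2^{-k-1}$, while $\e_k\downarrow 0$; the measure constraint and the density constraint pull in opposite directions as $\e_k$ decreases. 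The $i$-dependence of $\e_i$ (allowing $\e_i$ to be large for small $i$ and only tending to $0$ as $i\to\infty$) is the entire content of the result, not a cosmetic diagonalization to be layered on top of a single-$\e$ lemma, and the proof must build this non-uniformity in from the start (as Gou\"ezel--Karlsson do via a combinatorial lemma in which the allowed error genuinely grows at small scales).

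There is also a secondary but telling misidentification in the Vitali picture of Step 2: the quantity $c_n(x)-c_{n-i}(f^ix)$ compares the orbit segment of length $n$ starting at $x$ with the segment of length $n-i$ starting at $f^ix$, so by superadditivity it satisfies $c_n(x)-c_{n-i}(f^ix)\ge c_i(x)$ and is naturally attached to the \emph{initial} block $[0,i]$ of the orbit of $x$, not to the terminal window $[n-i,n]$. The witness intervals $[0,i(n)]$ for different bad $n$ are nested at $0$ rather than spread across $[0,N]$, so the proposed disjointification does not produce the claimed overshoot $c_N(x)\gtrsim\e N$, and in fact there is no way to sum the various $c_{n_j}(x)-c_{n_j-i_j}(f^{i_j}x)$ into a lower bound for $c_N(x)$ because they are anchored at the same base point $x$ rather than at consecutive times. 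Both defects --- the single-$\e$ density claim and the covering argument --- would need to be replaced, not repaired, to recover the cited theorem.
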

We take  $\rho<1/2$ and use Proposition \ref{GK} to obtain sets 
$E_\rho$ and $\tilde E_\rho$ of measure greater than $1-\rho$ for $a_n(x)$ 
and $\ta_n(x)$ respectively.  As in the previous argument, we take $x \in \r_\ell$ 
for which Birkhoff Ergodic Theorem holds for the indicator function of $\r_\ell$
and, in addition,  require that $x\in E_\rho \cap \tilde E_\rho$.
Then Proposition \ref{GK} ensures that there exist infinitely many $n$ 
satisfying \eqref{good n 2} simultaneously for $a_n(x)$ and for $\ta_n(x)$ with 
exponent $\chi$.  Moreover, there exists
$L=L(\e)$ so that $\tilde \e_i, \e_i < \e$ for all $i\ge L$ and thus we obtain that for such $n$ 
we have both \eqref{good n} and \eqref{good n'}.

All other choices remain the same except we take $\e' = 4\e/(\a\g)$. 
The argument for $\la$ is unchanged. The upper estimate for $-\chi$ is the same direct application of \eqref{d-close-coc-1} in Lemma \ref{mainest}. 
To obtain the lower estimate of $-\chi$ we use the equation
$$
 (\A^n_x)^{-1} -(\A_p^n)^{-1}  \,=\, 
  \sum_{i=0}^{n-1} \,(\A^i_p)^{-1}  \circ \left( (\A_{x_i})^{-1}-(\A_{p_i})^{-1} \right) 
  \circ (\A^{n-(i+1)}_{x_{i+1}})^{-1},
$$
which  yields an inequality for the norm  similar to \eqref{e1}. The first and third 
terms are then estimated using \eqref{d-close-coc-1} and \eqref{good n'}. 
We use the new choice of $\e'$ to get $e^{-4\e i}$ decay in \eqref{decay}
 to compensate for the extra term in \eqref{d-close-coc-1} compared to
  \eqref{d-close-coc}.
$\QED$

\section{Proofs of Proposition \ref{no periodic} and Corollary \ref{norm}}
Let $\mu_p$ be the uniform measure  on the orbit of a periodic point $p=f^kp$. 
Then 
$$
\begin{aligned}
&\la(\A,\mu_p)  = \lim_{N\to\infty} \frac{1}{N}\log \|\A_p^N\| \,=\,
\lim_{n\to\infty} \frac{1}{nk}\log \|(\A_p^k)^n\| \,= \, \\
& = \,\frac1k \,\log \left( \lim_{n\to\infty} \|(\A_p^k)^n\|^{1/n} \right)
\,= \, \frac1k \,\log \left( r(\A_p^k)\right)
\,\le\, \frac1k \log \| \A_p^k \|, 
\end{aligned}
$$
where $r(A)=\underset{n\to\infty}{\lim} \|A^n\|^{1/n} $ 
is the spectral radius of a linear operator $A$.\,
Similarly,
$$
-\chi(\A,\mu_p) = \frac1k \,\log \left( r((\A_p^k)^{-1}) \right), \;\text{ so }\;
\,\chi(\A,\mu_p) \ge \frac1k \log \| (\A_p^k)^{-1} \|^{-1} .
$$
It follows that for $p$ as in Theorem \ref{main} we have the 
one-sided estimates \eqref{one-sided}.


\subsection {Proof of Proposition \ref{no periodic}}$\,$
Suppose that $f$ is a homeomorphisms of a compact metric space $X$
and $a_n(x)$ is a continuous subadditive cocycle. Then it is easy to see
that $a_n=\sup _{x\in X} a_n(x)$ is a subadditive sequence and so there is limit
$$
\hat \nu (a):= \lim _{n \to \infty} a_n / n,
 \quad\text{where }\,a_n=\sup _{x\in X}\, a_n(x).
$$
By Theorem 1 in \cite{Sch}, $\,\hat \nu (a) = \sup _\mu \nu  (a, \mu)$ where supremum is taken over all $f$-invariant ergodic probability measures on $X$. 

Let $a_n(x)=\log\| \A^n_x \|$, where $\A$ is a H\"older continuous Banach 
cocycle over $f$, and let $f$ be  a homeomorphism of a compact metric space $X$ 
satisfying closing property. We denote $\hat \la (\A)= \sup_\mu \la (\A, \mu)$.
Then we have
$$
\hat \la (\A)  := \sup_\mu \la (\A, \mu) =\sup_\mu \nu (a, \mu) =  \hat \nu (a).
$$
We also consider the supremum  over all periodic measures $\mu_p$ and denote 
$$
\hat \la_p (\A)  := \sup_{\mu_p} \la (\A, \mu_p)=  \sup \left\{  \frac1k \log \left( r (\A_p^k)\right) : \;p= f^kp\right\}.
 $$
If $V$ is finite dimensional \cite[Theorem 1.4]{K} yields that $
\hat  \la (\A) =  \hat \la _p (\A).$ However,
for infinite dimensional space $\hat \la _p(\A)$ can be smaller 
than $\hat \la (\A)$. 

Indeed, let $f:X\to X$ be  the full shift  on two symbols, i.e.
$$
X=\left\{\,\bar x =(x_n)_{n\in \Z}: x_n \in \{0,1\} \right\} \quad \text{and}\quad
f(\bar x)=(x_{n+1})_{n\in \Z}
$$ 
and let $\A$ be a cocycle with values in bounded operators on Hilbert space $\ell _2$
given by
$$
 \A_{\bar x}=B_0 \,\text{ if }\, x_0=0 \quad\text{and}\quad 
  \A_{\bar x}=B_1 \,\text{ if }\, x_0=1.
$$
Since any $\A_x^n$ is a product of  $B_0$ and $B_1$ of length $n$, and any such product is $\A_x^n$ for some $x\in X$,  we have
$$
\begin{aligned}
 e^{\hat \la (\A)} & = \, \lim _{n \to \infty} \,\sup \,\left\{ e^{a_n(x)/n}:\; x\in X \right\}=
  \lim _{n \to \infty}\, \sup \left\{ \| \A^n _x \| ^{1/n} :\; x\in X \right\} \\
 & = \,\lim _{n \to \infty} \, \sup \left\{ \| A_n \cdots A_1 \| ^{1/n}:\;
  A_i \in\{ B_0, B_1 \}  \text{ for } i=1, ..., n \right\} 
 \,=:\,\hat \rho (B_0, B_1). \hskip1cm
 \end{aligned}
 $$ 
 Also, as any product of  $B_0$ and $B_1$ of length $n$
can be realized as  $\A_p^n$ for some $p=f^np$
 $$
 \begin{aligned}
& e^{\hat \la_p (\A)} =\, 
 \sup \left\{  \left( r (\A_p^k)\right)^{1/k} : \;p= f^kp\right\} = \\
 & = \,\sup \left\{ \left(r ( A_k \cdots A_1)\right) ^{1/k}: \;k\in \N , \;A_i \in\{ B_0, B_1 \}\right\} =\\
& \overset{(\ast)}{=}\,\limsup _{k \to \infty} \, \sup \left\{ \left(r ( A_k \cdots A_1)\right) ^{1/k}:\,
A_i \in\{ B_0, B_1 \}  \text{ for } i=1, ..., k\right\}  =: \bar \rho (B_0, B_1).
 \end{aligned}
 $$ 
Equality $(\ast)$ holds since $r(A) =r(A^m)^{1/m}$ and so taking 
$ A_k \cdots A_1$ with $r ( A_k \cdots A_1) ^{1/k}$ close to the supremum 
and repeating it we obtain an arbitrarily long product with the same value.
The number  $\hat \rho (B_0, B_1)$ is called the {\em joint spectral radius}\, of $B_0$ and $ B_1$, and
  $\bar \rho (B_0, B_1)$ is called the {\em generalized spectral radius.}
 \vskip.1cm
 
  By Theorem A.1 in \cite{Gu}, for any $0<\a <\b$ there exist two isomorphisms 
 $B_0$ and $B_1$ of $\ell _2$ such that $\bar \rho (B_0, B_1) =\a$ and $\hat \rho (B_0, B_1) =\b$. Hence for the corresponding cocycle $\A$ we have
$\hat \la _p (\A) < \hat\la (\A).$
It follows that  there is an ergodic measure 
 $\mu$ so that $\la (\A, \mu) >  \sup_{\mu_p} \la (\A, \mu_p)$.
 $\QED$

\subsection{Proof of Corollary \ref{norm}}
We prove the second part, and the first one is obtained similarly. We denote 
$$
\begin{aligned}
& \hat \sigma (\A)  \,= \,\lim _{n\to  \infty} \sup 
\left\{Q(x,n)^{1/n}:\; x\in X \right\} ,\\
& \hat \sigma_p (\A)=  \,\limsup _{k\to \infty}\, \sup 
\left\{ Q(p,k)^{1/k}:\; p=f^kp \right\},\\
& q_n(x)  \,=\, \log Q(x, n)=\log \|\A^n_x\|+\log \| (A^n_x)^{-1}\|.
\end{aligned}
$$
Since $a_n(x)=\log \| \A_x^n\|$ and 
$\ta_n(x)=\log \| (\A_x^n)^{-1}\|$ are subadditive cocycles over $f$,  so is 
$ q_n(x)=a_n(x)+\ta_n(x)$. For its exponent $\nu (q,\mu)$ we have
$$
  \nu(q, \mu) = \nu(a,\mu)+\nu(\ta, \mu) =\la(\A,\mu) - \chi(\A,\mu).
 $$
 As in the proof of Proposition \ref{no periodic} we consider
$$
\hat \nu (q)  =  \sup_\mu  \nu(q, \mu) = \lim _{n \to \infty}   q_n/n ,
 \quad\text{where }\,q_n=\sup _{x\in X} \,q_n(x)  =\log \,\sup _{x\in X} Q(x,n).
$$
It follows from Theorem \ref{main} that for any $N\in \N$
$$
\begin{aligned}
\hat \nu (q) & = \, \sup_\mu  \nu(q, \mu)=\sup_\mu(\la(\A,\mu) - \chi(\A,\mu)) \le \\
& \le \,\sup \left\{  \frac1k \log \| \A_p^k \|+\frac1k \log \| (\A_p^k)^{-1} \| : \;p= f^kp, \; k\ge N \right\}= \\
& = \,\sup \left\{  \frac1k \log Q(p,k) : \;p= f^kp, \; k\ge N  \right\}.
\end{aligned}
$$
Since $\log \hat \sigma (\A) = \hat \nu (q) $
 it follows that $\hat \sigma (\A) \le \hat \sigma_p (\A)$. 
 The opposite inequality  is clear, and so 
 $\hat \sigma (\A)= \hat \sigma_p (\A)$.  
 \vskip.1cm
 
Suppose that  $Q(p,k) \le Ce^{s k}$ whenever $p=f^kp.$ Then we have
$$
  s\ge \log \hat \sigma_p (\A) =\log \hat \sigma (\A) = \hat \nu(q)
  = \lim _{n \to \infty}   q_n/n.
$$
It follows that
 for each $\e>0$ there exists $N\in \N$ such that 
$q_n \le (s+\e)n$  for all $n>N$ and hence $Q(x,n) \le e^{(s+\e)n}$
for all  $x\in X$  and $n>N$.
Taking 
$$
C_\e = \max\, \{\, Q(x,n): \; x\in X \text{ and }1\le n \le N\, \},
$$ 
we obtain
$\,Q(x,n)\le C_\e e^{(s+\e) n}$ for all $x\in X$ and $n\in \N$.
\vskip.1cm

The statements for negative $n$ follow as $Q(x,-n)=Q(f^{-n}x,n).$
Hence
$$
\sup 
\left\{Q(x,-n)^{1/|n|}:\; x\in X \right\} =\sup 
\left\{Q(x,n)^{1/|n|}:\; x\in X \right\} 
$$
and so the limit as $n\to -\infty $ equals $\hat \sigma (\A).$
$\QED$


\vskip1cm


\begin{thebibliography}{99}

\bibitem[BP]{BP}  L. Barreira and Ya. Pesin.  {\em Nonuniformly hyperbolicity: dynamics of systems with nonzero Lyapunov exponents.} Encyclopedia of Mathematics and
Its Applications {\bf 115}, Cambridge University Press, 2007.
\vskip.05cm

\bibitem[Gu]{Gu} L. Gurvits. {\em Stability of discrete linear inclusion.}
Linear Algebra and its Applications,  vol. 231 (1995),  47-85.
\vskip.05cm

\bibitem[G]{G} M. Guysinsky. {\em Liv\v{s}ic theorem for Banach rings.} Preprint.
\vskip.05cm

\bibitem[GoKa]{GK} S. Gou\"ezel and A. Karlsson.
{\em Subadditive and multiplicative ergodic theorems}. Preprint.
\vskip.05cm

\bibitem[K11]{K} B. Kalinin. {\em Livsic Theorem for matrix cocycles.}   Annals of Math. {\bf 173} (2011), no. 2, 1025-1042.
\vskip.05cm

\bibitem[KS13]{KS13} B. Kalinin and V. Sadovskaya. 
{\em Cocycles with one exponent over partially hyperbolic systems.}
Geometriae Dedicata, Vol. 167, Issue 1 (2013), 167-188. 
\vskip.05cm

\bibitem[KaM99]{KM} A. Karlsson and G. Margulis. 
{\em A multiplicative ergodic theorem and nonpositively curved spaces.} 
Communications in Mathematical Physics {\bf 208} (1999) 107-123.
\vskip.05cm

\bibitem[KtH]{KH}  A. Katok and B. Hasselblatt. 
              {\em Introduction to the modern theory of dynamical systems.}  
              Encyclopedia of Mathematics and  its Applications
              {\bf 54}, Cambridge University Press,  1995.
\vskip.05cm

\bibitem[LL]{LL} Z. Lian and K. Lu. {\em Lyapunov exponents and invariant manifolds for random dynamical systems in a Banach space.}  Mem. Amer. Math. Soc. {\bf 206} (2010), no. 967. 
\vskip.05cm

\bibitem[LY12]{LY} Z. Lian and L. S. Young.  {\em Lyapunov exponents, periodic orbits, and horseshoes for semiflows on Hilbert spaces.} J. Amer. Math. Soc. 
{\bf 25} (2012), no. 3, 637-665.
\vskip.05cm

\bibitem[M12]{M} I. Morris. {\em The generalised Berger-Wang formula and the spectral radius of linear cocycles.}  Journal of Functional Analysis {\bf 262} (2012) 811-824.




\bibitem[Sch98]{Sch} S. J. Schreiber. 
            {\em On growth rates of subadditive functions for semi-flows.}
              J. Differential Equations, 148, 334Ð350, 1998.

\end{thebibliography}
\end{document}